\documentclass[draft]{amsart}
\usepackage{amsmath}
\usepackage{amssymb}

\newtheorem{thm}{Theorem}
\newtheorem{cor}[thm]{Corollary}

\newtheorem{lem}[thm]{Lemma}

\theoremstyle{remark}

\numberwithin{thm}{section}
\numberwithin{equation}{section}

\newcommand{\cv}{\mathbb{C}}

\newcommand{\bv}{\mathbb{B}}
\newcommand{\oc}{\mathcal{O}}
\newcommand{\arctanh}{\textup{arctanh}}
\newcommand{\aut}{\textup{Aut}}
\newcommand{\ds}{\displaystyle}

\begin{document}

\title[Fridman invariants and generalized squeezing functions]{On Fridman invariants and generalized squeezing functions}

\author[F. Rong, S. Yang]{Feng Rong, Shichao Yang}

\address{School of Mathematical Sciences, Shanghai Jiao Tong University, 800 Dong Chuan Road, Shanghai, 200240, P.R. China}
\email{frong@sjtu.edu.cn}

\address{School of Mathematics and Statistics, Huizhou University, 46 Yan Da Boulevard, Huizhou, Guangdong, 516007, P.R. China}
\email{yangshichao68@163.com}

\subjclass[2010]{32H02, 32F45}

\keywords{Fridman invariant, squeezing function, quotient invariant}

\thanks{The authors are partially supported by the National Natural Science Foundation of China (grant no. 11871333).}

\begin{abstract}
In this paper, we introduce the notion of \textit{generalized squeezing function} and study the basic properties of generalized squeezing functions and Fridman invariants. We also study the comparison of these two invariants, in terms of the so-called \textit{quotient invariant}.
\end{abstract}

\maketitle


\section{Introduction}

Due to the lack of a Riemann mapping theorem in several complex variables, it is of fundamental importance to study the biholomorphic equivalence of various domains in $\cv^n$, $n\ge 2$. For such a study, it is necessary to introduce different kinds of holomorphic invariants. In this paper, we study two such invariants, the Fridman invariants and (generalized) squeezing functions.

The Fridman invariant was defined by Fridman in \cite{Fridman1983} for Kobayashi hyperbolic domains $D$ in $\cv^n$, $n\ge 1$, as follows. Denote by $B_D^k(z,r)$ the $k_D$-ball in $D$ centered at $z\in D$ with radius $r>0$, where $k_D$ is the Kobayashi distance on $D$. For two domains $D_1$ and $D_2$ in $\cv^n$, denote by $\oc_u(D_1,D_2)$ the set of \textit{injective} holomorphic maps from $D_1$ into $D_2$.

Recall that a domain $\Omega\subset \cv^n$ is said to be \textit{homogeneous} if the automorphism group of $\Omega$ is transitive. For any bounded homogeneous domain $\Omega$, set
$$h_D^\Omega(z)=\inf \{1/r:\ B_D^k(z,r)\subset f(\Omega),\ f\in \oc_u(\Omega,D)\}.$$
For comparison purposes, we call $e_D^\Omega(z):=\tanh (h_D^\Omega(z))^{-1}$ the \textit{Fridman invariant} (cf. \cite{Deng-Zhang2019, Nikolov-Verma2018}).

For any bounded domain $D\subset \cv^n$, the \textit{squeezing function} was introduced in \cite{Deng2012} by Deng, Guan and Zhang as follows:
$$s_D(z)=\sup \{r:\ r\bv^n\subset f(D),\ f\in \oc_u(D,\bv^n),\ f(z)=0\}.$$
Here $\bv^n$ denotes the unit ball in $\cv^n$. Comparing with the Fridman invariant, it seems natural to consider more general squeezing functions, replacing $\bv^n$ by other ``model domains".

Recall that a domain $\Omega$ is said to be \textit{balanced} if for any $z\in \Omega$, $\lambda z\in \Omega$ for all $|\lambda|\le 1$. Let $\Omega$ be a bounded, balanced and convex domain in $\cv^n$. The \textit{Minkowski function} $\rho_\Omega$ is defined as (see e.g. \cite{Pflug2013})
$$\rho_\Omega(z)=\inf \{t>0:\ z/t\in \Omega\},\ \ \ z\in \cv^n.$$
Note that $\Omega=\{z\in \cv^n:\ \rho_\Omega(z)<1\}$. Set $\Omega(r)=\{z\in \cv^n:\ \rho_\Omega(z)<r\}$, $0<r<1$. Then for any bounded domain $D\subset \cv^n$, we can define the \textit{generalized squeezing function} as follows:
$$s_D^\Omega(z)=\sup \{r:\ \Omega(r)\subset f(D),\ f\in \oc_u(D,\Omega),\ f(z)=0\}.$$

It is clear from the definitions that both Fridman invariants and generalized squeezing functions are invariant under biholomorphisms, and both take value in $(0,1]$. There have been much study on the Fridman invariant and the squeezing function in recent years, and we refer the readers to two recent survey articles \cite{Zhang2017,Dengsurvey2019} and the references therein for various aspects of the current research on this subject.

The main purpose of this paper is to study some basic properties of both Fridman invariants and generalized squeezing functions. Moreover, we will also discuss the comparison of these two invariants, for which we introduce the \textit{quotient invariant}
$$m_D^\Omega(z)=s_D^\Omega(z)/e_D^\Omega(z),$$
where $D\subset \cv^n$ is bounded and $\Omega\subset \cv^n$ is bounded, balanced, homogeneous and convex.

In section \ref{S:Fridman}, we study basic properties of Fridman invariants, in particular refining several results from Fridman's original work \cite{Fridman1983}. In section \ref{S:squeezing}, we study basic properties of generalized squeezing functions, in particular extending various properties of the squeezing function given in \cite{Deng2012, Deng2016} to the more general setting. In section \ref{S:comparison}, we study the comparison of Fridman invariants and generalized squeezing functions, in particular generalizing previous results from \cite{Nikolov-Verma2018, RY:comparison}.

\section{Fridman invariants}\label{S:Fridman}

Throughout this section, we suppose that $D$ is a Kobayashi hyperbolic domain in $\cv^n$ and $\Omega$ is a bounded homogeneous domain in $\cv^n$ (unless otherwise stated).

We say that $f\in \oc_u(\Omega,D)$ is an \textit{extremal map} at $z\in D$ if
$$B_D^k(z,\arctanh(e_D^\Omega(z)))\subset f(\Omega).$$
It is not known from Fridman's original work \cite{Fridman1983} whether extremal maps exist. However, if we assume $D$ to be bounded or taut, then extremal maps do exist.

\begin{thm}\label{tee}
If $D$ is bounded or taut, then an extremal map exists at each $z\in D$.
\end{thm}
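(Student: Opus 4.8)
The plan is to extract an extremal map as a limit of a maximizing sequence, using a normal‑family / compactness argument. By the definition of $e_D^\Omega(z)$, there is a sequence of injective holomorphic maps $f_j\in\oc_u(\Omega,D)$ together with radii $r_j$ such that $B_D^k(z,r_j)\subset f_j(\Omega)$ and $1/r_j\to h_D^\Omega(z)$, so that $\arctanh(e_D^\Omega(z))=\sup_j r_j$ is realized in the limit. The goal is to show the $f_j$ subconverge (after passing to a subsequence) to some $f\in\oc_u(\Omega,D)$ for which $B_D^k(z,\arctanh(e_D^\Omega(z)))\subset f(\Omega)$.

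First I would fix the normalization. Since $\aut(\Omega)$ acts transitively, I may compose each $f_j$ with an automorphism of $\Omega$ so that a fixed base point $w_0\in\Omega$ is sent to a point in $B_D^k(z,r_j)$; more usefully, I would arrange $f_j(w_0)$ to lie in a fixed compact subset of $D$ (for instance, shrinking slightly, one can force $f_j(w_0)$ into a $k_D$‑ball of bounded radius around $z$). This keeps the images from escaping to the boundary. Next I would invoke the appropriate compactness: if $D$ is bounded, the family $\oc_u(\Omega,D)$ is uniformly bounded, hence normal by Montel's theorem; if $D$ is taut, tautness directly gives that every sequence in $\oc(\Omega,D)$ has a subsequence either converging locally uniformly to a holomorphic map $\Omega\to D$ or diverging compactly, and the normalization above rules out compact divergence. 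Thus in both cases a subsequence $f_{j_k}\to f$ locally uniformly, with $f\colon\Omega\to D$ holomorphic.

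The main obstacle, as usual in these extremal problems, will be showing that the limit $f$ is still \emph{injective} (so that $f\in\oc_u(\Omega,D)$ rather than merely $\oc(\Omega,D)$) and that the inclusion of balls survives the limit. For injectivity I would appeal to a Hurwitz‑type theorem for injective holomorphic maps in several variables: a locally uniform limit of injective holomorphic maps between domains in $\cv^n$ is either injective or has everywhere‑degenerate Jacobian; to exclude the degenerate alternative I would use a lower bound on the image sizes coming from the ball inclusions $B_D^k(z,r_{j_k})\subset f_{j_k}(\Omega)$, which forces $f$ to be nonconstant with open image and hence, by Hurwitz, injective. For the ball inclusion I would argue that for any fixed $r<\arctanh(e_D^\Omega(z))$ one has $B_D^k(z,r)\subset f_{j_k}(\Omega)$ for all large $k$, and then show the relatively compact ball $\overline{B_D^k(z,r)}$ is contained in $f(\Omega)$ by tracking preimages: each point $p\in B_D^k(z,r)$ has a preimage $q_{j_k}=f_{j_k}^{-1}(p)\in\Omega$, these preimages stay in a compact subset of $\Omega$ (this is where boundedness/tautness of $\Omega$ and the uniform distance estimates are needed), and a limit point $q$ satisfies $f(q)=p$.

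Finally I would let $r\uparrow\arctanh(e_D^\Omega(z))$ to conclude $B_D^k(z,\arctanh(e_D^\Omega(z)))\subset f(\Omega)$, so that $f$ is the desired extremal map. The delicate points I expect to need care are (i) the normalization step guaranteeing no compact divergence and keeping preimages in a compact set, and (ii) the Hurwitz‑type injectivity argument, which is the genuinely several‑variables ingredient and the step most likely to require an explicit lemma rather than a one‑line citation.
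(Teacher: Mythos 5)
Your strategy---normalize using homogeneity, extract a limit by Montel's theorem (bounded case) or tautness, and settle injectivity via the several-variables Hurwitz dichotomy---is the same as the paper's. The one substantive divergence is how the ball inclusion $B_D^k(z,\arctanh(e_D^\Omega(z)))\subset f(\Omega)$ is recovered in the limit. You track preimages $q_{j_k}=f_{j_k}^{-1}(p)$ of individual points $p$. The paper instead applies Montel a second time, to the inverse maps $g_i=f_i^{-1}|B_D^k(z,r_i)$ viewed as holomorphic maps on the increasing subdomains $B_D^k(z,r_i)$ (which are domains by its Lemma 2.2), extracts a limit $g\colon B_D^k(z,\arctanh(e_D^\Omega(z)))\to\bar\Omega$, gets nondegeneracy of \emph{both} limits from a Cauchy estimate ($|\det g_i'(z)|<c$ on a fixed Euclidean ball inside $B_D^k(z,r_1)$, hence $|\det f_i'(0)|>1/c$, using the exact normalization $f_i(0)=z$, which is available by homogeneity since $z\in f_i(\Omega)$), applies the Hurwitz lemma to both $f$ and $g$, and concludes from $f\circ g=\mathrm{id}$ on the limit ball. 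The paper's route buys a quantitative Jacobian bound for free and sidesteps exactly the point you flagged as delicate.

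That point is the one genuine soft spot in your write-up: the claim that the preimages $q_{j_k}$ ``stay in a compact subset of $\Omega$'' does not follow from boundedness of $\Omega$ alone---they could a priori drift to $\partial\Omega$, in which case a limit point $q$ is useless because $f$ is not defined there. To fill it you need complete hyperbolicity of $\Omega$ (true for bounded homogeneous domains; the paper itself invokes this in the next proof, via $s_\Omega\equiv c>0$ and the completeness theorem of Deng--Guan--Zhang), together with a uniform distance bound: normalizing $f_{j_k}(w_0)=z$ exactly, biholomorphy onto the image gives $k_\Omega(w_0,q_{j_k})=k_{f_{j_k}(\Omega)}(z,p)\le k_{B_D^k(z,r_{j_{k_0}})}(z,p)$ for all $k\ge k_0$ by monotonicity of the increasing Kobayashi balls, so $q_{j_k}$ lies in a fixed closed $k_\Omega$-ball, compact by completeness. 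With that inserted your argument closes, and the logical order it forces is: preimage compactness $\Rightarrow$ $B_D^k(z,r)\subset f(\Omega)$ for each $r<\arctanh(e_D^\Omega(z))$ $\Rightarrow$ the image of $f$ has nonempty interior $\Rightarrow$ $\det f'\not\equiv 0$ $\Rightarrow$ injectivity by Hurwitz. One caution on your phrasing there: ``nonconstant'' is strictly too weak to exclude the degenerate alternative, since maps with $\det f'\equiv 0$ can be nonconstant; your alternative formulation ``image with nonempty interior'' is the correct one and is what the ball inclusion actually delivers.
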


For the proof of Theorem \ref{tee}, we need two lemmas. The first is probably well-known, and we provide a short proof for completeness.

\begin{lem}\label{lbc}
For any domain $D$ in $\cv^n$, $B_D^k(z,r)$ is a subdomain of $D$.
\end{lem}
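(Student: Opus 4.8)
The plan is to verify the three defining properties of a subdomain---nonempty, open, and connected---in turn, with openness and connectivity being the substantive points.

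First I would record the easy facts: $B_D^k(z,r)$ is contained in $D$ by definition, and it is nonempty since it contains its own center $z$, as $k_D(z,z)=0<r$. For openness, I would fix $w\in B_D^k(z,r)$ and set $\delta=r-k_D(z,w)>0$. By the triangle inequality for $k_D$, any point $w'$ with $k_D(w,w')<\delta$ again satisfies $k_D(z,w')<r$ and so lies in the ball. To see that such points fill a full Euclidean neighborhood of $w$, I would choose a small coordinate ball $B(w,\epsilon)\subset D$; since the inclusion $B(w,\epsilon)\hookrightarrow D$ is distance-decreasing, one has $k_D(w,w')\le k_{B(w,\epsilon)}(w,w')$, and the right-hand side tends to $0$ as $w'\to w$ because the Kobayashi distance on a Euclidean ball is given by an explicit continuous formula. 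Hence a whole Euclidean neighborhood of $w$ lies in $B_D^k(z,r)$, proving openness.

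The connectivity is the main obstacle, and here I would unwind the chain definition of the Kobayashi distance. Given $w\in B_D^k(z,r)$, there is a chain of analytic discs realizing a length strictly below $r$: points $z=p_0,p_1,\dots,p_m=w$ together with holomorphic maps $\phi_j\colon\Delta\to D$ satisfying $\phi_j(0)=p_{j-1}$ and $\phi_j(\zeta_j)=p_j$, with $\sum_{j}\rho(0,\zeta_j)<r$, where $\Delta$ is the unit disc and $\rho$ the Poincar\'e distance. Concatenating the radial paths $t\mapsto\phi_j(t\zeta_j)$, $t\in[0,1]$, yields a continuous path in $D$ from $z$ to $w$. The key estimate is that for any intermediate point $q=\phi_j(s\zeta_j)$, the distance-decreasing property of $\phi_j$ and the triangle inequality give
$$k_D(z,q)\le \sum_{i<j}\rho(0,\zeta_i)+\rho(0,s\zeta_j)\le \sum_{i=1}^{m}\rho(0,\zeta_i)<r,$$
so every point of the path---not merely its endpoints---remains inside $B_D^k(z,r)$.

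This exhibits the ball as path-connected, hence connected, and together with the previous two steps shows it is a subdomain of $D$. I expect the only delicate bookkeeping to be the tracking of partial sums in the displayed estimate, which is precisely what guarantees that the constructed path stays within radius $r$ throughout.
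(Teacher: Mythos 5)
Your proof is correct, but it takes a genuinely different route from the paper on the substantive point, connectivity. The paper invokes the fact that the Kobayashi pseudodistance is \emph{inner} (Royden's theorem that $k_D$ is the integrated form of the Kobayashi--Royden pseudometric): given $w$ in the ball, it takes a piecewise $C^1$ curve $s$ from $z$ to $w$ with $\int_0^1 g_D^k(s(t);s'(t))\,dt<r$, and observes that every point of the curve then lies in the ball, since the partial integral bounds $k_D(z,s(t_0))$. You instead unwind the original chain-of-discs definition of $k_D$: you concatenate the radial paths $t\mapsto\phi_j(t\zeta_j)$ and control an intermediate point $q=\phi_j(s\zeta_j)$ via the triangle inequality, the distance-decreasing property of $\phi_j$, and the monotonicity $\rho(0,s\zeta_j)\le\rho(0,\zeta_j)$; your partial-sum estimate is the exact discrete analogue of the paper's partial-integral estimate, and it is correct (the normalization $\phi_j(0)=p_{j-1}$ is harmless, as one can precompose with a disc automorphism). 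What your approach buys is self-containedness: it needs only the definition of the pseudodistance, whereas the paper's one-line argument silently rests on the nontrivial theorem that $k_D$ is inner. What the paper's approach buys is brevity and the same argument working verbatim for any inner pseudodistance. On openness, the paper simply cites continuity of $k_D$, while you prove the needed upper semicontinuity of $w'\mapsto k_D(z,w')$ by local comparison with a Euclidean ball; that too is correct and a bit more careful than strictly necessary.
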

\begin{proof}
Since the Kobayashi pseudodistance is continuous, $B_D^k(z,r)$ is an open subset of $D$. Since the Kobayashi pseudodistance is inner, for any $w\in B_D^k(z,r)$ there exists a piece-wise $C^1$-curve $s:[0,1]\rightarrow D$ such that
$s(0)=z$, $s(1)=w$ and $\int_0^1 g_D^k(s(t);s'(t))<r$, where $g_D^k$ denotes the Kobayashi pseudometric. This implies that $s([0,1])\subset B_D^k(z,r)$. Hence $B_D^k(z,r)$ is a subdomain of $D$.
\end{proof}

The next lemma is known as the generalized Hurwitz's theorem in several complex variables (see e.g. \cite{shijihuai1996}).

\begin{lem}\label{lh}
Let $D$ be a domain in $\cv^n$ and $\{f_i(z)\}$ be a sequence of injective holomorphic maps from $D$ to $\cv^n$. Suppose that $f_i$'s converge to a map $f: D \rightarrow \cv^n$ uniformly on compact subsets of $D$. Then either $f$ is an injective holomorphic map or $\det f'(z)\equiv 0$.
\end{lem}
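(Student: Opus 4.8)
The plan is to observe first that $f$ is automatically holomorphic, being the locally uniform limit of holomorphic maps, and then to control the behaviour of the Jacobian determinant $\det f'$. The whole statement follows once I establish two things: (i) $\det f'$ is either identically zero or nowhere zero on $D$, and (ii) in the nowhere-zero case $f$ is injective. A preliminary routine remark is that the convergence of the $f_i$ upgrades to convergence of derivatives: $f_i'\to f'$ locally uniformly by the Cauchy estimates, and hence $\det f_i'\to \det f'$ locally uniformly as well, since the determinant is a polynomial in the matrix entries.

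For step (i) I would invoke the classical fact that an injective holomorphic map between equidimensional domains has nowhere-vanishing Jacobian, so that $\det f_i'(z)\neq 0$ for every $z\in D$ and every $i$. Suppose now $\det f'\not\equiv 0$ yet $\det f'(z_0)=0$ for some $z_0$. Since $\{\det f'=0\}$ is a proper analytic subset, I can choose a complex line $\ell$ through $z_0$ on which $\det f'$ does not vanish identically; restricting to a disc in $\ell$ produces a one-variable holomorphic function with an isolated zero at $z_0$ that is the locally uniform limit of the zero-free functions $\det f_i'|_\ell$. This contradicts the one-variable Hurwitz theorem (in the form: a locally uniform limit of zero-free holomorphic functions is either zero-free or identically zero). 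Hence $\det f'$ is nowhere zero whenever it is not identically zero.

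For step (ii), assume $\det f'$ is nowhere zero and suppose, for contradiction, that $f(a)=f(b)=w_0$ with $a\neq b$. By the inverse function theorem $f$ is a local biholomorphism near $a$ and near $b$, so I may fix $\varepsilon>0$ with $\overline{B(a,\varepsilon)}$ and $\overline{B(b,\varepsilon)}$ disjoint, contained in $D$, and on each of which $f$ is injective with $w_0$ attained only at the centre. Then $\rho:=\mathrm{dist}(w_0, f(\partial B(a,\varepsilon)))>0$, and the locally uniform convergence gives $\sup_{\partial B(a,\varepsilon)}|f_i-f|<\rho$ for all large $i$. The straight-line homotopy between $f$ and $f_i$ therefore avoids $w_0$ on $\partial B(a,\varepsilon)$, so the topological degrees agree: $\deg(f_i,B(a,\varepsilon),w_0)=\deg(f,B(a,\varepsilon),w_0)$. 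Since a holomorphic map with non-vanishing Jacobian is orientation-preserving (its real Jacobian equals $|\det f'|^2>0$), the latter degree equals $1$, so $f_i$ attains $w_0$ somewhere in $B(a,\varepsilon)$; the same argument on $B(b,\varepsilon)$ yields a second preimage in a disjoint ball, contradicting the injectivity of $f_i$. This forces $f$ to be injective.

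The routine parts are the convergence of the derivatives and the one-variable slicing. The main obstacle is the final step: converting a double point of the limit $f$ into a genuine failure of injectivity of $f_i$ for large $i$. In one variable this is precisely Rouch\'e's theorem, but in $\cv^n$ it must be replaced by the homotopy invariance of the topological mapping degree, together with the observation that holomorphic local biholomorphisms are orientation-preserving, so that the local degree is positive rather than merely nonzero. Some care is also needed in shrinking $\varepsilon$ so that $w_0$ has no preimage on the boundary spheres, which is exactly where the nowhere-vanishing of $\det f'$ from step (i) is used.
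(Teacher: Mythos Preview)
The paper does not supply a proof of this lemma; it simply records it as the ``generalized Hurwitz theorem'' and cites \cite{shijihuai1996}. So there is no in-paper argument to compare yours against.

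Your proof is correct and is essentially the standard one. The two ingredients you isolate are exactly right: (i) the dichotomy for $\det f'$, obtained by slicing with a complex line through a hypothetical zero and invoking the one-variable Hurwitz theorem on the nowhere-vanishing sequence $\det f_i'|_\ell$; and (ii) the injectivity in the nowhere-vanishing case, obtained by a degree/Rouch\'e-type argument on disjoint balls around two hypothetical preimages. The technical points you flag---convergence of derivatives via Cauchy estimates, the existence of a line on which $\det f'$ is not identically zero (which follows from looking at the lowest-order nonzero homogeneous term of the Taylor expansion at $z_0$), the positivity of the real Jacobian $|\det f'|^2$ ensuring local degree $+1$, and the need to shrink $\varepsilon$ so that $w_0\notin f(\partial B)$---are all handled correctly. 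The one classical fact you invoke without proof, that an injective holomorphic self-map of $\cv^n$ on a domain has nowhere-vanishing complex Jacobian, is standard and exactly what is needed here.
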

\begin{proof}[Proof of Theorem \ref{tee}]
Since the proof for the taut case is similar as (and simpler than) for the bounded case, we will assume that $D$ is bounded.

Without loss of generality, assume that $0\in \Omega$. By definition, there exist a sequence of holomorphic embeddings $f_i:\Omega \rightarrow D$ with $f_i(0)=z$, and a sequence of increasing positive numbers $r_i$ convergent to $\arctanh(e_D^\Omega(z))$ such that $B_D^k(z,r_i) \subset f_i(\Omega)$. Since $D$ is bounded, by Montel's theorem, there exists a subsequence $\{f_{k_i}\}$ of $\{f_i\}$ which converges to a holomorphic map $f:\Omega \rightarrow \bar{D}$ uniformly on compact subsets of $\Omega$.

By Lemma \ref{lbc}, $B_D^k(z,r_i)$'s are increasing subdomains of $D$. Denote $g_i:=f_i^{-1}|B_D^k(z,r_i)$. Since $\Omega$ is bounded, by Montel's theorem, there exists a subsequence $\{g_{k_i'}\}$ of $\{g_{k_i}\}$ which converges to
a holomorphic map $g:B_D^k(z,\arctanh(e_D^\Omega(z)))\rightarrow \bar\Omega$ uniformly on compact subsets of $B_D^k(z,\arctanh(e_D^\Omega(z)))$.

Take $s>0$ such that $\bv^n(z,s)\subset B_D^k(z,r_1)$. By Cauchy's inequality, for any $i$, $|\det g_i'(z)|<c$ for some positive constant $c$. So we have $|\det f_i'(0)|>\frac{1}{c}$, for any $i$. Thus, we have $|\det f'(0)|>0$ and $|\det g'(z)|>0$. By Lemma \ref{lh}, both $f$ and $g$ are injective. In particular, $f(\Omega)\subset D$ and $g(B_D^k(z,\arctanh(e_D^\Omega(z))))\subset \Omega$. Since $f\circ g(w)=w$, for all $w\in B_D^k(z,\arctanh(e_D^\Omega(z)))$, it shows that $f$ is the desired extremal map.
\end{proof}

Based on Theorem \ref{tee}, we can give another proof of \cite[Theorem 1.3(2)]{Fridman1983} as follows.

\begin{thm}
If there exists $z\in D$ such that $e_D^\Omega(z)=1$, then $D$ is biholomorphically equivalent to $\Omega$.
\end{thm}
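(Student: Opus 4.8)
The plan is to read off the conclusion directly from the existence of an extremal map provided by Theorem \ref{tee}. The first step is to interpret the hypothesis. Since an extremal map $f$ at $z$ satisfies $B_D^k(z,\arctanh(e_D^\Omega(z)))\subset f(\Omega)$, and $e_D^\Omega(z)=1$ gives $\arctanh(e_D^\Omega(z))=\arctanh(1)=+\infty$, I first need to identify the Kobayashi ball of infinite radius. Because $D$ is a domain it is connected, and the Kobayashi distance between any two of its points is finite; hence every point of $D$ lies in $B_D^k(z,r)$ for $r$ large enough, so $B_D^k(z,+\infty)=D$.

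Granting this, the argument is short. Applying Theorem \ref{tee}, I obtain an extremal map $f\in\oc_u(\Omega,D)$ at $z$. Extremality together with the previous step yields $D=B_D^k(z,\arctanh(e_D^\Omega(z)))\subset f(\Omega)$, while $f(\Omega)\subset D$ holds by the definition of $\oc_u(\Omega,D)$. Therefore $f(\Omega)=D$, so the injective holomorphic map $f$ is also surjective, hence a biholomorphism of $\Omega$ onto $D$. This proves that $D$ is biholomorphically equivalent to $\Omega$.

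The one point that requires care -- and the main obstacle -- is justifying that the machinery of Theorem \ref{tee} really applies at the boundary value $e_D^\Omega(z)=1$, where the target radius degenerates to $+\infty$. I would address this by re-examining the proof of Theorem \ref{tee} with the sequence $r_i\to+\infty$ rather than $r_i\to\arctanh(e_D^\Omega(z))<\infty$: the subdomains $B_D^k(z,r_i)$ now exhaust $D$, and the same Montel extraction produces not only the injective limit $f\colon\Omega\to D$ but also an injective holomorphic limit $g\colon D\to\Omega$ with $f\circ g=\mathrm{id}_D$, the injectivity again coming from Lemma \ref{lh} once the Cauchy-inequality bound on $\det f_i'(0)$ rules out degeneracy. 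The identity $f\circ g=\mathrm{id}_D$ then gives the surjectivity of $f$ directly, so no separate exhaustion argument is needed. (As in Theorem \ref{tee}, this uses $D$ bounded or taut to secure the relevant normal-families limits.)
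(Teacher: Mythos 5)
There is a genuine gap: you have quietly strengthened the hypothesis. This theorem is stated under the standing assumption of Section \ref{S:Fridman} that $D$ is merely Kobayashi hyperbolic --- it does \emph{not} carry the ``bounded or taut'' hypothesis of Theorem \ref{tee}. Your entire argument, both the direct appeal to Theorem \ref{tee} and your reworked Montel extraction with $r_i\to+\infty$, needs normality of the sequence $f_i\colon\Omega\to D$, and that is exactly where boundedness or tautness of $D$ enters; your closing parenthesis ``(As in Theorem \ref{tee}, this uses $D$ bounded or taut\dots)'' concedes the point but does not discharge it. For an unbounded, non-taut hyperbolic $D$ your extraction of the limit $f$ simply does not get off the ground, so what you have proved is the theorem with an extra hypothesis, not the theorem as stated. (Your preliminary steps are fine: $\arctanh(1)=+\infty$, and $B_D^k(z,+\infty)=D$ because the Kobayashi distance is finite between any two points of a domain.)

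The missing idea --- and the actual content of the paper's proof --- is a bootstrapping step that reduces to the bounded case using only the hypothesis $e_D^\Omega(z)=1$ and properties of $\Omega$, not of $D$. Since $\Omega$ is bounded and homogeneous, its squeezing function is a positive constant, so by \cite[Theorem 4.7]{Deng2012} $\Omega$ is Kobayashi complete, hence taut. Now run the extraction on the \emph{inverse} maps: the $g_i=f_i^{-1}|B_D^k(z,r_i)$ are defined on the balls $B_D^k(z,r_i)$, which exhaust $D$ because $r_i\to+\infty$, and they map into the taut domain $\Omega$ with $g_i(z)=0$, so a subsequence converges to a holomorphic $g\colon D\to\Omega$ --- tautness of the \emph{target} replaces Montel on the (possibly unbounded) $D$. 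Injectivity of $g$ is then proved not by Hurwitz but by the distance-decreasing property: if $g(z_1)=g(z_2)$, then $k_D(z_1,z_2)\le k_\Omega(g_{k_i}(z_1),g_{k_i}(z_2))\to k_\Omega(g(z_1),g(z_2))=0$, and hyperbolicity of $D$ (here is where that standing assumption earns its keep) forces $z_1=z_2$. Thus $D$ is biholomorphic to the bounded domain $g(D)\subset\Omega$, and only now does Theorem \ref{tee} legitimately apply, at which point your concluding argument ($f(\Omega)\supset B_D^k(z,\arctanh(1))=D$, so $f$ is onto) finishes the proof as in the paper. To repair your write-up you must either add this reduction or restate the theorem with $D$ bounded or taut, which would be a strictly weaker result.
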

\begin{proof}
Since $\Omega$ is homogeneous, $s_\Omega(z)\equiv c$ for some positive number $c$. Thus, by \cite[Theorem 4.7]{Deng2012}, $\Omega$ is Kobayashi complete, hence taut.

Without loss of generality, assume that $0\in \Omega$. Let $f_i$'s and $g_i$'s be as in the proof of Theorem \ref{tee}. Since $e_D^\Omega(z)=1$, we have $\bigcup_i B_D^k(z,r_i)=D$.

Since $\Omega$  is taut, by \cite[Theorem 5.1.5]{Kobayashi98}, there exists a subsequence $\{g_{k_i}\}$ of $\{g_i\}$ which converges to a holomorphic map $g:D \rightarrow \Omega$ uniformly on compact subsets of $D$. By the decreasing property of the Kobayashi distance, for $z_1,z_2\in D$ such that $g(z_1)=g(z_2)$, we have for $k_i$ large enough,
$$k_D(z_1,z_2)\le  k_{f_{k_i}(\Omega)}(f_{k_i}\circ g_{k_i}(z_1),f_{k_i}\circ g_{k_i}(z_2))=k_\Omega(g_{k_i}(z_1),g_{k_i}(z_2)).$$
Letting $k_i\rightarrow \infty$, by the continuity of the Kobayashi distance, we have $k_D(z_1,z_2)\le k_\Omega(g(z_1),g(z_2))=0$. Since $D$ is Kobayashi hyperbolic, we have $z_1=z_2$. Thus, $g$ is injective and $D$ is biholomorphic to a bounded domain.

Now Theorem \ref{tee} applies and shows that $D$ is biholomorphically equivalent to $\Omega$.
\end{proof}

It was shown in \cite[Theorem 1.3(1)]{Fridman1983} that $h_D^\Omega(z)$, hence $e_D^\Omega(z)$, is continuous. For its proof, Fridman showed that for $z_1$ and $z_2$ sufficiently close, $|1/h_D^\Omega(z_1)-1/h_D^\Omega(z_2)|\le k_D(z_1,z_2)$. Our next result gives a ``global" version of this estimate in terms of $e_D^\Omega(z)$.

\begin{thm}\label{tec}
For any $z_1$ and $z_2$ in $D$, we have
$$|e_D^\Omega(z_1)-e_D^\Omega(z_2)|\le \tanh[k_D(z_1,z_2)].$$
\end{thm}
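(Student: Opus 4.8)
The plan is to first establish the additive estimate
$$\left|\arctanh\big(e_D^\Omega(z_1)\big)-\arctanh\big(e_D^\Omega(z_2)\big)\right|\le k_D(z_1,z_2)$$
for the ``extremal radii'', and then to convert it into the stated multiplicative estimate via the addition formula for $\tanh$. For the additive estimate I would work directly from the definition of $h_D^\Omega$ rather than invoke the existence of extremal maps (Theorem \ref{tee}), since $D$ is here only assumed Kobayashi hyperbolic. Write $d=k_D(z_1,z_2)$ and note that, because $B_D^k(z,r')\subset B_D^k(z,r)$ for $r'<r$, the admissible radii form an interval, so $r_j:=\arctanh(e_D^\Omega(z_j))=1/h_D^\Omega(z_j)=\sup\{r:\ B_D^k(z_j,r)\subset f(\Omega),\ f\in \oc_u(\Omega,D)\}$.

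The geometric heart of the argument is the triangle inequality for the Kobayashi distance, which yields the ball inclusion $B_D^k(z_2,r-d)\subset B_D^k(z_1,r)$: indeed, if $k_D(z_2,w)<r-d$ then $k_D(z_1,w)\le d+k_D(z_2,w)<r$. Fix any $r<r_1$ and choose $f\in \oc_u(\Omega,D)$ with $B_D^k(z_1,r)\subset f(\Omega)$, which exists by definition of the supremum. Combining this with the inclusion above gives $B_D^k(z_2,r-d)\subset f(\Omega)$, and hence $r_2\ge r-d$ (this holds trivially when $r\le d$, the ball being empty). Letting $r\uparrow r_1$ gives $r_2\ge r_1-d$, and by symmetry $r_1\ge r_2-d$, so $|r_1-r_2|\le d$.

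To finish, set $a=e_D^\Omega(z_1)$ and $b=e_D^\Omega(z_2)$, both lying in $(0,1]$. From $|\arctanh a-\arctanh b|\le d$, together with the monotonicity and oddness of $\tanh$ and the addition formula $\tanh(\arctanh a-\arctanh b)=(a-b)/(1-ab)$, I obtain $|a-b|/(1-ab)\le \tanh d$. Since $a,b\in(0,1]$ force $1-ab\in[0,1)$, multiplying through yields $|a-b|\le(1-ab)\tanh d\le \tanh d$, which is exactly the assertion (the case $a=b=1$ being trivial, as then $a-b=0$).

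I do not anticipate a serious obstacle. The only points needing care are the bookkeeping in the passage $r\uparrow r_1$, ensuring that the inclusion is used for radii strictly below the supremum so that an actual embedding $f$ is available, and observing that the bound degenerates harmlessly when $r\le d$. The genuinely nonroutine move is the hyperbolic-identity manipulation in the last step, and it is precisely this algebra—using $1-ab\le 1$—that produces the $\tanh$ on the right-hand side and turns the additive Lipschitz bound on $\arctanh(e_D^\Omega)$ into the sharper multiplicative bound on $e_D^\Omega$ itself.
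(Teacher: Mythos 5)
Your proof is correct, and it reaches the stated inequality by a genuinely different decomposition than the paper's. The paper works at the level of $e_D^\Omega$ throughout: fixing $0<\epsilon<e_D^\Omega(z_1)$ and an embedding $f$ with $B_D^k(z_1,\arctanh[e_D^\Omega(z_1)-\epsilon])\subset f(\Omega)$, it proves the ball inclusion $B_D^k(z_2,\arctanh[e_D^\Omega(z_1)-\epsilon-\tanh k_D(z_1,z_2)])\subset B_D^k(z_1,\arctanh[e_D^\Omega(z_1)-\epsilon])$ directly, the key ingredient being Lemma \ref{ltanh} (if $t_3\le t_1+t_2$ then $\tanh t_3\le\tanh t_1+\tanh t_2$), which the paper proves by a calculus/monotonicity computation. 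You instead factor the argument through the additive Lipschitz estimate $|\arctanh e_D^\Omega(z_1)-\arctanh e_D^\Omega(z_2)|\le k_D(z_1,z_2)$ --- the global form of Fridman's original local estimate, which the paper quotes before the theorem but does not itself use --- obtained from the plain triangle-inequality inclusion $B_D^k(z_2,r-d)\subset B_D^k(z_1,r)$, and only then convert to the $\tanh$ level via the exact subtraction formula $\tanh(x-y)=(\tanh x-\tanh y)/(1-\tanh x\tanh y)$. Your conversion step is essentially Lemma \ref{ltanh} in disguise (and in fact gives a one-line proof of it: $\tanh(t_1+t_2)=(\tanh t_1+\tanh t_2)/(1+\tanh t_1\tanh t_2)\le\tanh t_1+\tanh t_2$), while the paper's lemma earns its keep by being reused later in Theorem \ref{tscb}. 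What your route buys is twofold: the additive estimate is of independent interest, and keeping the factor $1-ab$ yields the strictly sharper bound $|e_D^\Omega(z_1)-e_D^\Omega(z_2)|\le\bigl(1-e_D^\Omega(z_1)e_D^\Omega(z_2)\bigr)\tanh[k_D(z_1,z_2)]$, a pseudo-hyperbolic-type refinement the paper's statement discards. The bookkeeping is equivalent in both proofs (the paper's case $z_2\notin B_D^k(z_1,\cdot)$ corresponds to your case $r\le d$, and its $\epsilon$-approximation to your $r\uparrow r_1$); the only point you treat tersely is the degenerate case $e_D^\Omega(z_1)=1$, where $\arctanh e_D^\Omega(z_1)=\infty$, but since $k_D(z_1,z_2)<\infty$ your additive estimate forces $e_D^\Omega(z_2)=1$ as well, so the case $a=1$, $b<1$ cannot occur and your disposal of it is legitimate.
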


For the proof of Theorem \ref{tec}, we need the following basic fact, whose proof we provide for completeness.

\begin{lem}\label{ltanh}
Suppose that $t_i\ge 0$, $i=1,2,3$, and $t_3\le t_1+t_2$. Then,
$$\tanh(t_3)\le \tanh(t_1)+\tanh(t_2).$$
\end{lem}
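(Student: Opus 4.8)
The plan is to reduce the claim to the subadditivity of $\tanh$ on the nonnegative half-line, and then to establish that subadditivity by a direct appeal to the hyperbolic addition formula. Since $t_3 \le t_1 + t_2$ and $\tanh$ is strictly increasing on $[0,\infty)$, the monotonicity immediately yields $\tanh(t_3) \le \tanh(t_1 + t_2)$. Thus it suffices to prove the inequality in the ``saturated'' case, namely
$$\tanh(t_1 + t_2) \le \tanh(t_1) + \tanh(t_2), \qquad t_1, t_2 \ge 0.$$
This step is essentially free and isolates the only genuine content of the lemma.

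For the remaining inequality I would invoke the addition formula
$$\tanh(t_1 + t_2) = \frac{\tanh(t_1) + \tanh(t_2)}{1 + \tanh(t_1)\tanh(t_2)}.$$
The key observation is that for $t_1, t_2 \ge 0$ both $\tanh(t_1)$ and $\tanh(t_2)$ lie in $[0,1)$, so the denominator satisfies $1 + \tanh(t_1)\tanh(t_2) \ge 1$. Since the numerator $\tanh(t_1) + \tanh(t_2)$ is also nonnegative, dividing it by a quantity at least $1$ can only decrease it, which gives exactly $\tanh(t_1 + t_2) \le \tanh(t_1) + \tanh(t_2)$. Chaining this with the monotonicity step from the first paragraph completes the argument.

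There is no real obstacle here: the lemma is elementary, and the only thing to get right is the sign/range bookkeeping that makes the denominator at least $1$, which is precisely where the hypothesis $t_i \ge 0$ enters. I would note in passing that an alternative route is available via calculus: one fixes $t_2 \ge 0$ and differentiates $\varphi(t_1) = \tanh(t_1) + \tanh(t_2) - \tanh(t_1 + t_2)$, using $\varphi(0) = 0$ together with $\varphi'(t_1) = \operatorname{sech}^2(t_1) - \operatorname{sech}^2(t_1 + t_2) \ge 0$ (since $\operatorname{sech}^2$ is decreasing on $[0,\infty)$) to conclude $\varphi \ge 0$. The addition-formula proof is cleaner and avoids introducing $\operatorname{sech}$, so that is the version I would write out.
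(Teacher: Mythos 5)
Your proposal is correct, and both of your steps are sound: the reduction to the saturated case $t_3=t_1+t_2$ via monotonicity of $\tanh$ is exactly how the paper also begins (in disguised form, through the inequality between the expressions $-\tfrac{2}{e^{2t_3}+1}-1$ and $-\tfrac{2}{e^{2(t_1+t_2)}+1}-1$), and the addition formula $\tanh(t_1+t_2)=\bigl(\tanh t_1+\tanh t_2\bigr)/\bigl(1+\tanh t_1\tanh t_2\bigr)$ with numerator nonnegative and denominator at least $1$ does give subadditivity in one line. Where you differ from the paper is the key step: the authors avoid the addition identity entirely and instead run a calculus argument, writing $\tanh t=1-\tfrac{2}{e^{2t}+1}$, fixing $t_1$, and showing that the auxiliary function $g(t_2)=\tfrac{2}{e^{2(t_1+t_2)}+1}-\tfrac{2}{e^{2t_2}+1}$ is nondecreasing because $e^{t}/(e^{t}+1)^2$ is decreasing for $t\ge 0$ --- which is precisely the $\operatorname{sech}^2$-monotonicity argument you sketch as your alternative at the end, up to a change of variables. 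The trade-off: your addition-formula proof is shorter and makes completely transparent where the hypothesis $t_i\ge 0$ enters (nonnegativity of the numerator and of the product $\tanh t_1\tanh t_2$), but it is special to $\tanh$; the paper's derivative argument is self-contained and is really the specialization of the general fact that an increasing concave function vanishing at $0$ is subadditive on $[0,\infty)$, so it would survive with $\tanh$ replaced by any such function.
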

\begin{proof}
Since $t_3\le t_1+t_2$, we have
$$-\frac{2}{e^{2t_3}+1}-1\le -\frac{2}{e^{2(t_1+t_2)}+1}-1.$$
Define
$$f(t_1,t_2)=\frac{2}{e^{2t_1}+1}+\frac{2}{e^{2t_2}+1}-\frac{2}{e^{2(t_1+t_2)}+1}-1.$$
To show that $\tanh(t_3)\le \tanh(t_1)+\tanh(t_2)$, it suffices to show that $f(t_1,t_2)\le 0$, for all $t_1,t_2\ge 0$. For any fixed $t_1\ge 0$, consider
$$g(t_2)=\frac{2}{e^{2(t_1+t_2)}+1}-\frac{2}{e^{2t_2}+1}.$$
Then,
$$g'(t_2)=-\frac{4e^{2(t_1+t_2)}}{(e^{2(t_1+t_2)}+1)^{2}}+\frac{4e^{2t_2}}{(e^{2t_2}+1)^{2}}.$$
Since the function $\ds \frac{e^t}{(e^t+1)^{2}}$ is decreasing for $t\ge 0$, we have $g'(t_2)\ge 0$ for all $t_2\ge 0$. Hence, $g(t_2)\ge g(0)$ for all $t_2\ge 0$, which implies that $f(t_1,t_2)=g(0)-g(t_2)\le 0$ for all $t_1,t_2\ge 0$.
\end{proof}

\begin{proof}[Proof of Theorem \ref{tec}]
Fix $0<\epsilon<e_D^\Omega(z_1)$, by definition there exists a holomorphic embedding $f:\Omega \rightarrow D$ such that $B_D^k(z_1,\arctanh[e_D^\Omega(z_1)-\epsilon]) \subset f(\Omega)$.

If $z_2\not\in B_D^k(z_1,\arctanh[e_D^\Omega(z_1)-\epsilon])$, then obviously
$$e_D^\Omega(z_2)>0\ge e_D^\Omega(z_1)-\epsilon-\tanh[k_D(z_1,z_2)].$$

If $z_2\in B_D^k(z_1,\arctanh[e_D^\Omega(z_1)-\epsilon])$, then by Lemma \ref{ltanh}, we have for all $z$ with $\tanh[k_D(z_2,z)]<e_D^\Omega(z_1)-\epsilon-\tanh[k_D(z_1,z_2)]\}$ that
$$\tanh[k_D(z_1,z)]\le \tanh[k_D(z_2,z)]+\tanh[k_D(z_1,z_2)]<e_D^\Omega(z_1)-\epsilon.$$
Thus,
$$B_D^k(z_2,\arctanh[e_D^\Omega(z_1)-\epsilon-\tanh[k_D(z_1,z_2)]]) \subset B_D^k(z_1,\arctanh[e_D^\Omega(z_1)-\epsilon])\subset f(\Omega).$$
This implies that $e_D^\Omega(z_2)\ge e_D^\Omega(z_1)-\epsilon-\tanh[k_D(z_2,z_1)]$.

Since $\epsilon$ is arbitrary, we have $e_D^\Omega(z_2)\ge e_D^\Omega(z_1)-\tanh[k_D(z_2,z_1)]$. Similarly, $e_D^\Omega(z_1)\ge e_D^\Omega(z_2)-\tanh[k_D(z_2,z_1)]$. This proves the theorem.
\end{proof}

We say that a sequence of subdomains $\{D_j\}_{j\ge 1}$ of $D$ is a sequence of \textit{exhausting subdomains} if for any compact subset $K\subset D$, there exists $N>0$ such that $K\subset D_j$ for all $j>N$. In this case, we also say that $\{D_j\}_{j\ge 1}$ \textit{exhausts} $D$.

\begin{cor}\label{cek}
Let $\{D_j\}_{j\ge 1}$ be a sequence of exhausting subdomains of $D$. If $\ds \lim_{j\rightarrow \infty}e_{D_j}^\Omega(z)=e_D^\Omega(z)$ for all $z\in D$, then the convergence is uniform on compact subsets of $D$.
\end{cor}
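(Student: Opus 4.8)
The plan is to upgrade pointwise convergence to uniform convergence on compacta by a Dini-type argument: I would use Theorem \ref{tec} to establish \emph{equicontinuity} of the family $\{e_{D_j}^\Omega\}$ that is uniform in $j$, and then combine this equicontinuity with the assumed pointwise convergence in the standard way. Throughout, the crucial observation is that enlarging a domain decreases the Kobayashi distance.

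First I would fix a compact set $K\subset D$ and choose a connected, relatively compact open set $\Omega_0$ with $K\subset \Omega_0\Subset D$ (possible since $D$ is a domain). Because $\overline{\Omega_0}$ is compact and $\{D_j\}$ exhausts $D$, there is $N_0$ such that $\Omega_0\subset D_j$ for all $j>N_0$. Since $\Omega_0\subset D_j$, the decreasing property of the Kobayashi distance yields $k_{D_j}(z_1,z_2)\le k_{\Omega_0}(z_1,z_2)$ for all $z_1,z_2\in \Omega_0$ and all $j>N_0$. Moreover each $D_j$, being a subdomain of the Kobayashi hyperbolic domain $D$, is again Kobayashi hyperbolic, so Theorem \ref{tec} applies to $D_j$ and gives, for $z_1,z_2\in \Omega_0$ and $j>N_0$,
$$|e_{D_j}^\Omega(z_1)-e_{D_j}^\Omega(z_2)|\le \tanh[k_{D_j}(z_1,z_2)]\le \tanh[k_{\Omega_0}(z_1,z_2)].$$
Since $\Omega_0$ is bounded, $k_{\Omega_0}$ is continuous and induces the Euclidean topology, so the right-hand side is uniformly small whenever $z_1,z_2\in K$ are Euclidean-close. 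This yields uniform (in $j$) equicontinuity of $\{e_{D_j}^\Omega\}_{j>N_0}$ on $K$, while the limit $e_D^\Omega$ is itself continuous by \cite[Theorem 1.3(1)]{Fridman1983} (equivalently, by Theorem \ref{tec} applied to $D$).

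Finally I would run the usual $\epsilon/3$ argument. Given $\epsilon>0$, choose $\delta>0$ from the equicontinuity so that both $|e_{D_j}^\Omega(z_1)-e_{D_j}^\Omega(z_2)|<\epsilon/3$ (for $j>N_0$) and $|e_D^\Omega(z_1)-e_D^\Omega(z_2)|<\epsilon/3$ hold whenever $z_1,z_2\in K$ with $|z_1-z_2|<\delta$. Cover $K$ by finitely many $\delta/2$-balls centered at $w_1,\dots,w_m\in K$, and use the pointwise convergence $e_{D_j}^\Omega(w_l)\to e_D^\Omega(w_l)$ at these finitely many points to find $N\ge N_0$ with $|e_{D_j}^\Omega(w_l)-e_D^\Omega(w_l)|<\epsilon/3$ for all $l$ and all $j>N$. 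For any $z\in K$, picking $w_l$ with $|z-w_l|<\delta$, the triangle inequality gives $|e_{D_j}^\Omega(z)-e_D^\Omega(z)|<\epsilon$, proving uniform convergence on $K$.

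The main obstacle I anticipate is precisely the control of the varying Kobayashi distances $k_{D_j}$: a priori one only has $k_{D_j}\ge k_D$ on $D_j$, which is the wrong direction for bounding the oscillation of $e_{D_j}^\Omega$ uniformly in $j$. The essential point that makes the argument work is that on a fixed relatively compact subdomain $\Omega_0$ all the distances $k_{D_j}$ (for $j$ large) are bounded above by the single fixed distance $k_{\Omega_0}$, which converts the pointwise estimate of Theorem \ref{tec} into equicontinuity that is uniform in $j$.
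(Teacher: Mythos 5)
Your proof is correct and follows essentially the same route as the paper's: an $\epsilon/3$ argument combining pointwise convergence at finitely many centers with uniform-in-$j$ equicontinuity of $\{e_{D_j}^\Omega\}$, obtained from Theorem \ref{tec} together with the decreasing property of the Kobayashi distance on a fixed set contained in all $D_j$ for $j$ large. The only cosmetic difference is that the paper uses explicit Euclidean balls $\bv^n(z_i,r)\subset D_j$ with the bound $\tanh[k_{\bv^n(z_i,r)}(z,z_i)]\le \|z-z_i\|/r$, whereas you majorize all the $k_{D_j}$ by the single distance $k_{\Omega_0}$ of a relatively compact subdomain and invoke its continuity.
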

\begin{proof}
Let $K$ be a compact subset of $D$. Then there exists $0<r<1$ such that $\bigcup_{z\in K}\bv^n(z,r)\Subset D$. Hence there exists $N_1>0$ such that $\bigcup_{z\in K}\bv^n(z,r)\subset D_j$ for all $j>N_1$. Fix any $\epsilon>0$ and take $\delta=r\epsilon/3$. Since $\{\bv^n(z,\delta)\}_{z\in K}$ is an open covering of $K$, there is a finite set $\{z_i\}_{i=1}^m$ such that $K\subset \bigcup_{i=1}^m \bv^n(z_i,\delta)$. For any $z\in K$, there is some $z_i$ such that $z\in \bv^n(z_i,\delta)$. By Theorem \ref{tec} and the decreasing property of the Kobayashi distance, we have
\begin{align*}
|e_D^\Omega(z)-e_{D_j}^\Omega(z)|
&\le |e_D^\Omega(z)-e_D^\Omega(z_i)|+|e_D^\Omega(z_i)-e_{D_j}^\Omega(z_i)|+|e_{D_j}^\Omega(z)-e_{D_j}^\Omega(z)|\\
&\le \tanh[k_D(z,z_i)]+|e_D^\Omega(z_i)-e_{D_j}^\Omega(z_i)|+\tanh[k_{D_j}(z,z_i)]\\
&\le 2\tanh[k_{\bv^n(z_i,r)}(z,z_i)]+|e_D^\Omega(z_i)-e_{D_j}^\Omega(z_i)|\\
&<2\epsilon/3+|e_D^\Omega(z_i)-e_{D_j}^\Omega(z_i)|
\end{align*}
On the other hand, there exists $N_2>0$ such that $|e_D^\Omega(z_i)-e_{D_j}^\Omega(z_i)|<\epsilon/3$ for all $z_i$ and $j>N_2$. Take $N=\max\{N_1,N_2\}$. Then for any $j>N$, we have $|e_D^\Omega(z)-e_{D_j}^\Omega(z)|<\epsilon$ for all $z\in K$.
This completes the proof.
\end{proof}

The condition $\ds \lim_{j\rightarrow \infty}e_{D_j}^\Omega(z)=e_D^\Omega(z)$ in the previous corollary is usually referred to as the \textit{stability} of the Fridman invariant, which was shown to be true when $D$ is Kobayashi complete in \cite[Theorem 2.1]{Fridman1983}. Under the weaker assumption of $D$ being taut (or bounded), we have the following inequality.

\begin{thm}\label{tes}
Suppose that $D$ is bounded or taut. Let $\{D_j\}_{j\ge 1}$ be a sequence of exhausting subdomains of $D$. Then for any $z\in D$, $\ds \limsup_{j\rightarrow \infty} e_{D_j}^\Omega(z)\le e_D^\Omega(z)$.
\end{thm}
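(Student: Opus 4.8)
The plan is to prove the equivalent inequality $e_D^\Omega(z)\ge a$, where $a:=\limsup_{j\to\infty}e_{D_j}^\Omega(z)$, by showing that for every $s<\arctanh(a)$ there is a map $F\in\oc_u(\Omega,D)$ with $B_D^k(z,s)\subset F(\Omega)$; letting $s\uparrow\arctanh(a)$ then yields $\arctanh(e_D^\Omega(z))\ge\arctanh(a)$. First I would pass to a subsequence along which $e_{D_j}^\Omega(z)\to a$ and fix $s<\arctanh(a)$, so that $\arctanh(e_{D_j}^\Omega(z))>s$ for all large $j$. Straight from the definition of the Fridman invariant this produces holomorphic embeddings $f_j\colon\Omega\to D_j$ with $B_{D_j}^k(z,s)\subset f_j(\Omega)$; since $\Omega$ is homogeneous I may precompose with automorphisms to arrange $f_j(0)=z$. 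Note that this step uses only the definition, not Theorem \ref{tee}.

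Next I would run the normal-families scheme from the proof of Theorem \ref{tee}, now with the targets varying. Regarding each $f_j$ as a map into $D$: in the bounded case Montel's theorem gives a locally uniform limit $f\colon\Omega\to\bar D$, while in the taut case tautness gives a subsequence that either converges locally uniformly to some $f\colon\Omega\to D$ or is compactly divergent, the latter being ruled out by $f_j(0)\equiv z$. For the inverses $g_j:=f_j^{-1}|_{B_{D_j}^k(z,s)}$ I would use that $\Omega$ is bounded. The essential point is that the balls $B_{D_j}^k(z,s)$ exhaust the fixed ball $B_D^k(z,s)$: by the stability of the Kobayashi distance under exhaustion one has $k_{D_j}\to k_D$ locally uniformly, so every compact $K\subset B_D^k(z,s)$ satisfies $K\subset B_{D_j}^k(z,s)$ for all large $j$. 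Montel's theorem and a diagonal argument then yield a locally uniform limit $g\colon B_D^k(z,s)\to\bar\Omega$ with $g(z)=0$, the domain $B_D^k(z,s)$ being a subdomain by Lemma \ref{lbc}.

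The injectivity of $f$ and $g$ follows the Jacobian argument of Theorem \ref{tee}. Differentiating $f_j\circ g_j=\mathrm{id}$ at $z$ gives $f_j'(0)\,g_j'(z)=I$, hence $|\det f_j'(0)|\,|\det g_j'(z)|=1$. A Cauchy estimate on a fixed Euclidean ball about $z$ (which lies in $B_{D_j}^k(z,s)$ for large $j$) bounds $|\det g_j'(z)|$ from above, since $\Omega$ is bounded, and therefore bounds $|\det f_j'(0)|$ from below; for the reverse bound I would, in the bounded case, Cauchy-estimate $f_j'(0)$ directly, and in the taut case use that $D$ is Kobayashi hyperbolic, so the distance-decreasing property of the Kobayashi--Royden metric under $f_j$ together with the boundedness of $\Omega$ controls $\|f_j'(0)\|$ and hence bounds $|\det g_j'(z)|$ from below. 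Passing to the limit keeps both Jacobians nonvanishing, so Lemma \ref{lh} shows $f$ and $g$ are injective; as in Theorem \ref{tee} this gives $f(\Omega)\subset D$ and $g(B_D^k(z,s))\subset\Omega$. Finally, passing to the limit in $f_j\circ g_j=\mathrm{id}$---now legitimate because $g$ lands in the interior $\Omega$---gives $f\circ g=\mathrm{id}$ on $B_D^k(z,s)$, so $B_D^k(z,s)=f(g(B_D^k(z,s)))\subset f(\Omega)$, and $F:=f$ is the required map.

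I expect the main obstacle to be the varying targets $D_j$: unlike in Theorem \ref{tee}, the ball containments $B_{D_j}^k(z,s)\subset f_j(\Omega)$ are stated for the moving metrics $k_{D_j}$, so transferring them to the fixed ball $B_D^k(z,s)$ rests on the exhaustion property $k_{D_j}\to k_D$ (locally uniformly), which is what lets the Kobayashi balls of $D_j$ increase to those of $D$. A secondary difficulty is the unbounded taut case, where Montel is unavailable for the $f_j$ and must be replaced by tautness together with the non-escape argument from $f_j(0)=z$, and where the derivative bound on $f_j'(0)$ must come from hyperbolicity of $D$ rather than from boundedness of the target.
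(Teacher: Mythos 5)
Your proposal is correct and follows essentially the same route as the paper's proof: extract a subsequence realizing the $\limsup$, normalize embeddings $f_j\colon\Omega\to D_j$ with $f_j(0)=z$, apply Montel (or tautness) to $f_j$ and to the inverses $g_j$ on the Kobayashi balls $B_{D_j}^k(z,s)$ --- which exhaust $B_D^k(z,s)$, exactly the content of the paper's Lemma \ref{lbe} --- and conclude injectivity of the limits via the Jacobian identity $\det f_j'(0)\det g_j'(z)=1$ and the generalized Hurwitz theorem (Lemma \ref{lh}). Your extra care with the upper bound on $|\det f_j'(0)|$ in the unbounded taut case (via hyperbolicity of $D$) is a detail the paper subsumes under ``the taut case is similar,'' but it is the same argument, not a different one.
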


To prove Theorem \ref{tes}, we need the following

\begin{lem}\label{lbe}
Let $\{D_j\}_{j\ge 1}$ be a sequence of exhausting subdomains of $D$. Then for any $z\in D$ and $r>0$, $\{B_{D_j}^k(z,r)\}_{j\ge 1}$ exhausts $B_D^k(z,r)$.
\end{lem}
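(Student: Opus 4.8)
The plan is to verify the two conditions implicit in the statement: first, that each $B_{D_j}^k(z,r)$ is a subdomain of $B_D^k(z,r)$ once $j$ is large enough that $z\in D_j$, and second, and more substantially, that every compact subset of $B_D^k(z,r)$ is eventually swallowed by the $B_{D_j}^k(z,r)$. The first is immediate from the decreasing property of the Kobayashi distance: since $D_j\subset D$ we have $k_D\le k_{D_j}$ on $D_j$, so $B_{D_j}^k(z,r)\subset B_D^k(z,r)$, and each is a subdomain by Lemma \ref{lbc}. Thus the whole content is the exhaustion property, which I would reduce to a stability statement for the Kobayashi distance, namely that $k_{D_j}(z,w)\to k_D(z,w)$ in a manner uniform over $w$ in a compact set.

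So fix a compact set $K\subset B_D^k(z,r)$. Since $k_D(z,\cdot)$ is continuous and $K$ is compact, $\rho:=\max_{w\in K}k_D(z,w)<r$; fix $\eta>0$ with $\rho+\eta<r$. Because $k_D\le k_{D_j}$ always holds, the goal $K\subset B_{D_j}^k(z,r)$ for large $j$ will follow once I show $k_{D_j}(z,w)<r$ uniformly for $w\in K$, and for this it suffices to establish $k_{D_j}(z,w)\le k_D(z,w)+\eta/2$ for all $w\in K$ and all large $j$.

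The engine is a rescaling argument on chains of analytic discs. Given $w\in K$, choose a chain of holomorphic discs $\phi_i:\Delta\to D$, $1\le i\le m$, with $\phi_1(0)=z$, $\phi_i(t_i)=\phi_{i+1}(0)$, $\phi_m(t_m)=w$, realizing $k_D(z,w)$ up to $\eta/4$ in the sense that $\sum_i k_\Delta(0,t_i)<k_D(z,w)+\eta/4$, where $k_\Delta$ denotes the Poincar\'e distance on the unit disc. For $\lambda\in(\max_i|t_i|,1)$ the rescaled discs $\psi_i(t):=\phi_i(\lambda t)$ again form a chain from $z$ to $w$ with nodes $t_i/\lambda$, their images lie in the compact set $K_w:=\bigcup_i\phi_i(\overline{\Delta_\lambda})\subset D$, and $\sum_i k_\Delta(0,t_i/\lambda)\to\sum_i k_\Delta(0,t_i)$ as $\lambda\to 1^-$; so for $\lambda$ near $1$ the rescaled chain has length $<k_D(z,w)+\eta/2$. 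Once $K_w\subset D_j$, which happens for all large $j$ because $\{D_j\}$ exhausts $D$, this chain is admissible in $D_j$ and yields $k_{D_j}(z,w)<k_D(z,w)+\eta/2\le\rho+\eta/2<r$. Note that $z=\phi_1(0)\in K_w$, so this automatically forces $z\in D_j$.

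This gives the estimate pointwise, and the main obstacle is to make it uniform over $K$, since the threshold on $j$ produced above depends on $w$ through $K_w$. I would resolve this with a finite-covering argument: for each $w\in K$ choose $\delta>0$ with $\overline{\bv^n(w,\delta)}\subset D$, so that for large $j$ one has $\overline{\bv^n(w,\delta)}\subset D_j$ and hence, again by the decreasing property, $k_{D_j}(w,w')\le k_{\bv^n(w,\delta)}(w,w')$ for $w'\in\bv^n(w,\delta)$; the right-hand side is independent of $j$ and tends to $0$ as $w'\to w$, so there is $\delta'\le\delta$ with $k_{D_j}(w,w')<\eta/4$ on $\bv^n(w,\delta')$. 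Combining with the pointwise estimate and the triangle inequality gives $k_{D_j}(z,w')<\rho+3\eta/4<r$ for all $w'\in\bv^n(w,\delta')$ and all $j$ beyond a bound $N_w$. Covering $K$ by finitely many such balls $\bv^n(w_1,\delta_1'),\dots,\bv^n(w_p,\delta_p')$ and taking $N=\max_i N_{w_i}$ then yields $K\subset B_{D_j}^k(z,r)$ for all $j>N$, which is precisely the exhaustion property. I expect this last uniformization step to be the delicate part, since the pointwise rescaling is routine.
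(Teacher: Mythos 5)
Your proof is correct, and it follows the same two-step skeleton as the paper's --- a pointwise stability statement $k_{D_j}(z,w)\to k_D(z,w)$ followed by a finite-covering uniformization over $K$ using small Euclidean balls, the decreasing property, and the triangle inequality --- but it differs in how the pointwise step is obtained. The paper does not argue via chains of discs at all: it introduces an auxiliary \emph{increasing} sequence $G_j\Subset D$ with $\bigcup_j G_j=D$, cites \cite[Proposition 3.3.5]{Pflug2013} to get $k_{G_j}\to k_D$, and then transfers this to the (possibly non-monotone) exhausting sequence $\{D_j\}$ via the sandwich $k_D\le k_{D_j}\le k_{G_l}$ for $j$ large. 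Your rescaling of an almost-extremal chain of analytic discs $\phi_i(t)\mapsto\phi_i(\lambda t)$, with nodes $t_i/\lambda$ and images confined to the compact set $\bigcup_i\phi_i(\overline{\Delta_\lambda})$, reproves this stability from first principles; it is more self-contained (no citation needed) and handles the non-monotone case directly, at the cost of being longer, while the paper's route is shorter but leans on the reference. Your uniformization step is essentially identical to the paper's (the paper covers $K$ by balls of radius $\delta\tanh(\epsilon/3)$ and estimates $k_{\bv^n(z_l,\delta)}(z_l,w)$; you do the same with $\eta/4$ margins), with the small simplification that, since $k_D\le k_{D_j}$ automatically, you only need the one-sided bound $k_{D_j}\le k_D+\eta$ rather than the two-sided convergence the paper records. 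One cosmetic remark: you explicitly verify $B_{D_j}^k(z,r)\subset B_D^k(z,r)$ and the subdomain property via Lemma \ref{lbc}, which the paper leaves implicit; that is a harmless and arguably welcome addition.
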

\begin{proof}
By Lemma \ref{lbc}, we know that $B_D^k(z,r)$ is a subdomain of $D$ for any $z\in D$ and $r>0$. Firstly, we show that
$$\lim_{j\rightarrow \infty}k_{D_j}(z',z'')=k_D(z',z''),\ \ \forall z',z''\in D.$$
Consider a sequence of subdomains $\{G_j\}_{j\ge 1}$ such that (i) $G_j\Subset D$, (ii) $G_j\subset G_{j+1}$, (iii) $D=\bigcup_{j\ge 1} G_j$. By \cite[Proposition 3.3.5]{Pflug2013}, we have
$$\lim_{j\rightarrow \infty}k_{G_j}(z',z'')=k_D(z',z''),\ \ \forall z',z''\in D.$$
For any $j\ge 1$, there exists $N_j>0$ such that $G_j\subset D_i$, for all $i>N_j$. By the decreasing property of the Kobayashi distance, we get
$$\lim_{j\rightarrow \infty}k_{D_j}(z',z'')=k_D(z',z''),\ \ \forall z',z''\in D.$$

Now we prove that for any $K\Subset B_D^k(z,r)$, there exists $N>0$ such that $K\subset B_{D_j}^k(z,r)$ for all $j>N$.

Since $k_D(z,\cdot)$ is continuous, there exists $0<r_0<r$ such that $k_D(z,w)\le r_0$ for all $w\in K$. To show that $K\subset B^k_{D_j}(z,r)$, we need to check that $k_{D_j}(z,w)<r$ for all $w\in K$. Since $K$ is a compact subset of $B_D^k(z,r)$, there exists $\delta>0$ such that $\bigcup_{w\in K} \bv^n(w,\delta)\Subset B_D^k(z,r)$. Hence, there exists $N_1>0$ such that $\bigcup_{w\in K} \bv^n(w,\delta)\subset D_j$ for all $j>N_1$.

Let $0<\epsilon<r-r_0$ and take $\delta_1=\delta \tanh(\epsilon/3)$. Since $\{\bv^n(z,\delta_1)\}_{z\in K}$ is an open covering of $K$, there is a finite set $\{z_i\}_{i=1}^m$ such that $K\subset \bigcup_{i=1}^m \bv^n(z_i,\delta_1)$. It is clear that there exists $N_2>0$ such that $|k_{D_j}(z,z_l)-k_D(z,z_l)|<\epsilon/3$ for any $j>N_2$ and $1\le l\le m$. For any $w\in K$, there is some $z_l$ such that $w\in \bv^n(z_l,\delta_1)$. Set $N=\max\{N_1,N_2\}$. Then for all $j>N$, by the decreasing property of the Kobayashi distance, we have
\begin{align*}
&|k_{D_j}(z,w)-k_D(z,w)|\\
\le &|k_{D_j}(z,w)-k_{D_j}(z,z_l)|+|k_{D_j}(z,z_l)-k_D(z,z_l)|+|k_D(z,z_l)-k_D(z,w)|\\
\le &k_{D_j}(z_l,w)+|k_{D_j}(z,z_l)-k_D(z,z_l)|+k_D(z_l,w)\\
\le &2k_{\bv^n(z_l,\delta)}(z_l,w)+|k_{D_j}(z,z_l)-k_D(z,z_l)|\\
<&2\epsilon/3+\epsilon/3=\epsilon.
\end{align*}
Therefore, $k_{D_j}(z,w)<k_D(z,w)+\epsilon\le r_0+\epsilon<r$ for all $w\in K$ and $j>N$. This completes the proof.
\end{proof}

\begin{proof}[Proof of Theorem \ref{tes}]
Since the proof for the taut case is similar as (and simpler than) for the bounded case, we will assume that $D$ is bounded.

For any $z\in D$, let $e_{D_{l_i}}^\Omega$ be a sequence such that $\ds \lim_{l_i\rightarrow \infty}e_{D_{l_i}}^\Omega(z)=\limsup_{j\rightarrow \infty} e^\Omega_{D_j}(z)=:\tanh r$. For any $0<\epsilon<r$, there exists $N_1>0$ such that $e_{D_{l_i}}^\Omega>\tanh(r-\epsilon)$ for all $l_i>N_1$.

Without loss of generality, assume that $0\in \Omega$. By definition, for any $l_i>N_1$, there exists an open holomorphic embedding $f_{l_i}:\Omega \rightarrow D_{l_i}$ such that $f_{l_i}(0)=z$ and $B_{D_{l_i}}^k(z,r-\epsilon) \subset f_{l_i}(\Omega)$. Since $D$ is bounded, by Montel's theorem, there exists a subsequence $\{f_{k_i}\}$ of $\{f_{l_i}\}$ which converges to a holomorphic map $f:\Omega \rightarrow \bar{D}$ uniformly on compact subsets of $\Omega$.

By Lemma \ref{lbc}, each $B_{D_{l_i}}^k(z,r-\epsilon)$ is a domain. Define $g_{l_i}=f_{l_i}^{-1}|B_{D_{l_i}}^k(z,r-\epsilon)$. By Montel's theorem and Lemma \ref{lbe}, we may assume that the sequence $g_{k_i}$ converges uniformly on compact subsets of $B_D^k(z,r-\epsilon)$ to a holomorphic map $g:B_D^k(z,r-\epsilon)\rightarrow \bar\Omega$.

Take $s>0$ such that $\bv^n(z,s)\Subset B_D^k(z,r-\epsilon)$. By Lemma \ref{lbe}, there exists $N>N_1$ such that $\bv^n(z,s)\subset B_{D_{l_i}}^k(z,r-\epsilon)$, for all $l_i>N$. Consider $g_{l_i}|\bv^n(z,s)$. By Cauchy's inequality, $|\det g_{l_i}'(z)|<c$ for all $l_i>N$, for some positive constant $c$. So we have $|\det f_{l_i}'(0)|>\frac{1}{c}$ for all $l_i>N$. Thus, we have $|\det f'(0))|>0$ and $|\det g'(z)|>0$. By Lemma\ref{lh}, both $f$ and $g$ are injective. In particular, $f(\Omega)\subset D$ with $f(0)=z$ and $g(B_D^k(z,r-\epsilon))\subset \Omega$ with $g(z)=0$. Since $f\circ g(w)=w$ for all $w\in B_D^k(z,r-\epsilon)$, we get $e_D^\Omega(z)\ge \tanh(r-\epsilon)$. Since $\epsilon$ is arbitrary ,we have $\ds e_D^\Omega(z)\ge \tanh r=\limsup_{j\rightarrow \infty} e^\Omega_{D_j}(z)$.
\end{proof}

Based on Corollary \ref{cek} and Theorem \ref{tes}, we can slightly refine \cite[Theorem2.1]{Fridman1983} as follows.

\begin{thm}
Suppose that $D$ is Kobayashi complete and $\{D_j\}_{j\ge 1}$ exhausts $D$. Then $\ds \lim_{j\rightarrow \infty} e_{D_j}^\Omega(z)=e_D^\Omega(z)$ uniformly on compact subsets of $D$.
\end{thm}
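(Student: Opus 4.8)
The plan is to establish the pointwise convergence $\lim_{j\to\infty}e_{D_j}^\Omega(z)=e_D^\Omega(z)$ for every $z\in D$ and then upgrade it to uniform convergence via Corollary \ref{cek}. Since a Kobayashi complete domain is in particular taut, Theorem \ref{tes} immediately supplies one half of the pointwise statement: $\limsup_{j\to\infty}e_{D_j}^\Omega(z)\le e_D^\Omega(z)$ at each $z$. The real work is the reverse inequality $\liminf_{j\to\infty}e_{D_j}^\Omega(z)\ge e_D^\Omega(z)$, which is precisely the stability result \cite{Fridman1983} and is where completeness, rather than mere tautness, enters.

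For this lower bound I would proceed as follows. Assume $0\in\Omega$, write $R=\arctanh(e_D^\Omega(z))$, and fix a small $\epsilon>0$. Since $D$ is taut, Theorem \ref{tee} yields an extremal map $f\in\oc_u(\Omega,D)$ with $B_D^k(z,R)\subset f(\Omega)$. Because $D$ is Kobayashi complete, the Kobayashi distance is inner, and $D$ is locally compact, the closed ball $\overline{B_D^k(z,R-\epsilon)}$ is compact; being contained in $B_D^k(z,R)\subset f(\Omega)$, its image under $f^{-1}$ is a compact subset of $\Omega$, and by exhaustion $\overline{B_D^k(z,R-\epsilon)}\subset D_j$ for all large $j$. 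The aim is to produce from $f$, for each such $j$, a map in $\oc_u(\Omega,D_j)$ whose image still contains $B_{D_j}^k(z,R-\epsilon)$. Throughout, the decreasing property $k_{D_j}\ge k_D$ is used to pass inclusions from $D$-balls to the smaller $D_j$-balls, so that capturing $B_D^k(z,R-\epsilon)$ already captures $B_{D_j}^k(z,R-\epsilon)$. Once this is achieved, $e_{D_j}^\Omega(z)\ge\tanh(R-\epsilon)$, and letting $j\to\infty$ and then $\epsilon\to 0$ gives the desired lower bound.

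The main obstacle is exactly this construction: the extremal map $f$ sends $\Omega$ into $D$, not into $D_j$, so its image must be shrunk into $D_j$ while losing at most $\epsilon$ in the radius of the captured Kobayashi ball. When $\Omega$ is balanced this is transparent, since precomposing $f$ with the dilation $w\mapsto tw$ replaces $f(\Omega)$ by the relatively compact set $f(\Omega(t))\Subset D$ with captured radius tending to $R$ as $t\to 1$; for a general bounded homogeneous $\Omega$ the same effect has to be extracted from a relatively compact exhaustion of $\Omega$ together with the completeness of $\Omega$ and a normal-families argument of the type already used in the proofs of Theorems \ref{tee} and \ref{tes}. Controlling the radius loss under this modification is the delicate point, and it is where I expect the bulk of the effort to lie.

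With pointwise convergence in hand, the proof concludes in one line: the hypotheses of Corollary \ref{cek} are now satisfied, so $\lim_{j\to\infty}e_{D_j}^\Omega(z)=e_D^\Omega(z)$ holds uniformly on compact subsets of $D$.
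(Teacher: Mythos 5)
Your proposal is correct and follows essentially the same route as the paper: the $\limsup$ half comes from Theorem \ref{tes} (completeness implies tautness), the $\liminf$ half from shrinking a (near-)extremal map into $D_j$ using compactness of closed Kobayashi balls under completeness together with the exhaustion, and the upgrade to uniform convergence is exactly Corollary \ref{cek}. The ``delicate point'' you flag for non-balanced $\Omega$ is handled in the paper precisely by your dilation trick --- it writes $B_D^k(z,\arctanh(e_D^\Omega(z))-\epsilon)\subset f((1-\delta)\Omega)\Subset D$ after normalizing $0\in\Omega$ --- so the paper invokes no additional normal-families argument there (and, like your sketch, it leaves the justification of the scaling for a general homogeneous model implicit).
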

\begin{proof}
Since $D$ is Kobayashi complete, thus taut, we have $\ds \limsup_{j\rightarrow \infty} e_{D_j}^\Omega(z)\le e_D^\Omega(z)$ for all $z\in D$, by Theorem\ref{tes}.

For $z\in D$ and $0<\epsilon<e_D^\Omega(z)$, by the definition of Fridman invariant and the completeness of $D$, there exists an open holomorphic embedding $f:\Omega \rightarrow D$ such that $B_D^k(z,e_D^\Omega(z)-\epsilon)\Subset f(\Omega)$. Thus, there exists $\delta>0$ such that $B_D^k(z,e_D^\Omega(z)-\epsilon)\subset f((1-\delta)\Omega)\Subset D$. Hence, there exists $N>0$ such that $B_D^k(z,e_D^\Omega(z)-\epsilon)\subset f((1-\delta)\Omega)\subset D_j$ for all $j>N$. By the decreasing property of the Kobayashi distance, we have $B^k_{D_j}(z,e_D^\Omega(z)-\epsilon)\subset B_D^k(z,e_D^\Omega(z)-\epsilon)$. So we have $B^k_{D_j}(z,e_D^\Omega(z)-\epsilon)\subset f((1-\delta)\Omega)$ for all $j>N$, which implies that $\ds \liminf_{j\rightarrow \infty} e_{D_j}^\Omega(z)\ge e_D^\Omega(z)-\epsilon$. Since $\epsilon$ is arbitrary, we get $\ds \liminf_{j\rightarrow \infty} e_{D_j}^\Omega(z)\ge e_D^\Omega(z)$ and hence $\ds \lim_{j\rightarrow \infty}e_{D_j}^\Omega(z)=e_D^\Omega(z)$. By Corollary \ref{cek}, the convergence is uniform on compact subsets of $D$.
\end{proof}

\section{Generalized squeezing functions}\label{S:squeezing}

Throughout this section, we suppose that $D$ is a bounded domain in $\cv^n$ and $\Omega$ is a bounded, balanced and convex domain in $\cv^n$ (unless otherwise stated).

Denote by $k_\Omega$ and $c_\Omega$ the Kobayashi and Carath\'{e}odory distance on $\Omega$, respectively. The following Lempert's theorem is well-known:

\begin{thm}\cite[Theorem 1]{L:convex}\label{T:convex}
On a convex domain $\Omega$, $k_\Omega=c_\Omega$.
\end{thm}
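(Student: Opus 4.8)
The plan is to prove the two inequalities $c_\Omega \le k_\Omega$ and $k_\Omega \le c_\Omega$ separately. The first holds on every domain and is immediate from the distance-decreasing property: any holomorphic $F \colon \Omega \to \bv^1$ satisfies $k_{\bv^1}(F(z),F(w)) \le k_\Omega(z,w)$, and taking the supremum over all such $F$ gives $c_\Omega(z,w) \le k_\Omega(z,w)$. The whole content of the theorem is therefore $k_\Omega \le c_\Omega$, and the strategy is to show that through any pair $z_0, z_1 \in \Omega$ there pass \emph{both} a \emph{complex geodesic} $\varphi \colon \bv^1 \to \Omega$, i.e.\ a holomorphic disc with $\varphi(0) = z_0$, $\varphi(t) = z_1$ for some $t \in [0,1)$ that is isometric for $k_{\bv^1}$ and $k_\Omega$, \emph{and} a holomorphic left inverse $F \colon \Omega \to \bv^1$ with $F \circ \varphi = \mathrm{id}_{\bv^1}$. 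Indeed, granting such a pair, $\varphi$ is admissible in the Kobayashi infimum so $k_\Omega(z_0,z_1) \le k_{\bv^1}(0,t)$, while $F$ is admissible in the Carath\'eodory supremum so $c_\Omega(z_0,z_1) \ge k_{\bv^1}(F(z_0),F(z_1)) = k_{\bv^1}(0,t)$; together with the trivial inequality this forces $c_\Omega(z_0,z_1) = k_\Omega(z_0,z_1)$.

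I would first establish the existence of $\varphi$ and $F$ in the model case where $\Omega$ is bounded, smoothly bounded and \emph{strongly} convex, with defining function $\rho$. The geodesic is obtained from an extremal problem: among holomorphic discs joining $z_0$ to $z_1$ one minimizes the hyperbolic parameter, and a normal-families limit (Montel, using boundedness) produces an extremal disc, which by the contracting property is automatically isometric. The essential analytic input is that this extremal disc extends smoothly to $\overline{\bv^1}$ with $\varphi(\partial \bv^1) \subset \partial\Omega$; this follows by reading the extremality condition as a nonlinear Riemann--Hilbert problem governed by $\rho$ and invoking elliptic boundary regularity. Strong convexity then guarantees that the associated \emph{dual disc} $\tilde\varphi \colon \overline{\bv^1} \to \cv^n$, built from the complex normals $\partial\rho(\varphi(\zeta))$ along $\partial\bv^1$, is non-degenerate.

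The left inverse $F$ is assembled from $\rho$ and the dual disc: one writes an explicit holomorphic function of $z$ --- essentially the parameter $\zeta$ recovered by a Cauchy-type extension of the boundary pairing between $z$ and $\tilde\varphi(\zeta)$ --- and verifies, using the boundary conditions from the Riemann--Hilbert analysis together with the convexity of $\Omega$, that it maps $\Omega$ into $\bv^1$ and restricts to the identity on $\varphi(\bv^1)$. With $F \circ \varphi = \mathrm{id}$ in hand, the equality $c_\Omega = k_\Omega$ holds on the geodesic by the argument of the first paragraph, and since $z_0, z_1$ were arbitrary it holds throughout $\Omega$. To remove the strong-convexity and smoothness hypotheses, I would exhaust a general bounded convex $\Omega$ from inside by smooth bounded strongly convex domains $\Omega_j \uparrow \Omega$ (available since $\Omega$ is convex), apply the model case to get $c_{\Omega_j} = k_{\Omega_j}$, and let $j \to \infty$: the Kobayashi distances satisfy $k_{\Omega_j} \to k_\Omega$ by stability under increasing exhaustion, while a normal-families argument gives $c_{\Omega_j} \to c_\Omega$ as well, so the equality passes to the limit.

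The main obstacle is the boundary regularity of the extremal disc and the construction of the holomorphic left inverse. Both rest on the delicate analysis of the Riemann--Hilbert problem attached to the strongly convex defining function and on the non-degeneracy of the dual disc --- precisely where strong convexity, rather than mere convexity, is used, and the reason why the general convex case must be reached by approximation rather than treated directly.
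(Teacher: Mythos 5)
This theorem is not proved in the paper at all: it is quoted verbatim from Lempert (\cite[Theorem 1]{L:convex}), so there is no internal proof to compare against. Your outline is in fact a faithful summary of Lempert's own argument: the trivial inequality $c_\Omega\le k_\Omega$ via distance decreasing; the reduction of the converse to producing, through any two points, an extremal disc $\varphi$ together with a holomorphic left inverse $F\colon\Omega\to\bv^1$ with $F\circ\varphi=\mathrm{id}_{\bv^1}$; the construction of $F$ from the dual disc of complex normals in the smoothly bounded strongly convex case via a Riemann--Hilbert problem; and the passage to general convex domains by exhaustion. The limiting arguments you sketch are sound: $k_{\Omega_j}\to k_\Omega$ under an increasing exhaustion is standard (cf.\ \cite[Proposition 3.3.5]{Pflug2013}, used elsewhere in this paper), and $c_{\Omega_j}\to c_\Omega$ follows by Montel plus the maximum principle applied to near-extremal functions normalized to vanish at $z_0$; for unbounded convex $\Omega$ one would first exhaust by bounded convex domains.

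Judged as a proof rather than a road map, however, the entire weight rests on steps you name but do not execute, and these are precisely the content of Lempert's theorem: the $C^\infty$ (or $C^{1,\alpha}$) regularity of the extremal disc up to $\partial\bv^1$, the non-degeneracy of the dual disc, and the verification that the Cauchy-type extension defines a holomorphic map $\Omega\to\bv^1$ restricting to the identity along the geodesic. Two smaller points deserve correction. First, attainment of the extremal disc needs more than Montel: a normal-families limit a priori lands in $\overline{\Omega}$, and one must use tautness of bounded convex domains (elementary via supporting hyperplanes and the maximum principle, but it should be said). Second, your claim that the extremal disc is ``automatically isometric by the contracting property'' is not automatic: extremality gives $k_\Omega(z_0,z_1)=k_{\bv^1}(0,t)$ at that single pair of points, and isometry along the whole disc only follows \emph{after} $F$ is constructed; fortunately your concluding argument uses only the single pair, so this slip is harmless. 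In sum: the architecture is the correct (and essentially the only known) one, matching the cited source, but the analytic core is invoked rather than proved --- which is exactly why the paper, like most of the literature, cites Lempert instead of reproving the result.
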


Combining Theorem \ref{T:convex} with \cite[Proposition 2.3.1 (c)]{Pflug2013}, we have the following key lemma.

\begin{lem}\label{lnk}
For any $z\in \Omega$, $\rho_\Omega(z)=\tanh (k_\Omega(0,z))=\tanh (c_\Omega(0,z))$.
\end{lem}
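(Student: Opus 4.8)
The plan is to reduce everything to the single identity $\rho_\Omega(z)=\tanh(k_\Omega(0,z))$. Indeed, the second equality $\tanh(k_\Omega(0,z))=\tanh(c_\Omega(0,z))$ is immediate from Lempert's theorem (Theorem \ref{T:convex}), which gives $k_\Omega=c_\Omega$. The case $z=0$ is trivial, so I assume $z\ne 0$. The structural fact underlying the whole argument is that, since $\Omega$ is bounded, balanced and convex, the Minkowski function $\rho_\Omega$ is in fact a genuine complex norm on $\cv^n$: boundedness yields positive definiteness, the balanced condition yields the absolute homogeneity $\rho_\Omega(\lambda v)=|\lambda|\rho_\Omega(v)$, and convexity yields the triangle inequality. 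Thus $\Omega$ is exactly the open unit ball of $\rho_\Omega$. I will then prove $\rho_\Omega(z)=\tanh(k_\Omega(0,z))$ by establishing an upper bound for $k_\Omega$ and a lower bound for $c_\Omega$ and squeezing with the general inequality $c_\Omega\le k_\Omega$.

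For the upper bound, set $r=\rho_\Omega(z)\in(0,1)$ and $w=z/r$, so that $\rho_\Omega(w)=1$. Define $\phi:\bv^1\rightarrow\cv^n$ by $\phi(\lambda)=\lambda w$. By absolute homogeneity, $\rho_\Omega(\phi(\lambda))=|\lambda|<1$ for $|\lambda|<1$, so $\phi$ is a holomorphic map of the unit disk $\bv^1$ into $\Omega$ with $\phi(0)=0$ and $\phi(r)=z$. The distance-decreasing property of the Kobayashi distance then gives $k_\Omega(0,z)\le k_{\bv^1}(0,r)=\arctanh(r)$, that is, $\tanh(k_\Omega(0,z))\le\rho_\Omega(z)$.

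For the lower bound I will produce a supporting linear functional. Since $\rho_\Omega$ is a complex norm and $z\ne 0$, the complex Hahn--Banach theorem furnishes a $\cv$-linear functional $\ell:\cv^n\rightarrow\cv$ with $\ell(z)=\rho_\Omega(z)$ and $|\ell(v)|\le\rho_\Omega(v)$ for all $v\in\cv^n$. In particular $|\ell(v)|<1$ for $v\in\Omega$, so $\ell$ restricts to a holomorphic map $\Omega\rightarrow\bv^1$ with $\ell(0)=0$; this is exactly the content packaged by \cite[Proposition 2.3.1 (c)]{Pflug2013} in the balanced setting. Applying the definition of the Carath\'eodory distance to this competitor yields $c_\Omega(0,z)\ge k_{\bv^1}(0,\ell(z))=\arctanh(\rho_\Omega(z))$, i.e. $\rho_\Omega(z)\le\tanh(c_\Omega(0,z))$. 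Combining the two bounds with $c_\Omega\le k_\Omega$ gives the chain $\rho_\Omega(z)\le\tanh(c_\Omega(0,z))\le\tanh(k_\Omega(0,z))\le\rho_\Omega(z)$, forcing all three to be equal. I expect the main obstacle to be the lower bound: one must verify carefully that $\rho_\Omega$ is a norm (so that Hahn--Banach applies and the dominating functional maps $\Omega$ into the disk), since this is the step that genuinely uses all three hypotheses on $\Omega$, whereas the upper bound and the identification $k_\Omega=c_\Omega$ are routine given Theorem \ref{T:convex}.
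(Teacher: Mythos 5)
Your proof is correct, but it takes a genuinely different route from the paper's. The paper offers no argument of its own for this lemma: it obtains it in one stroke by combining Lempert's theorem (Theorem \ref{T:convex}) with the cited result \cite[Proposition 2.3.1 (c)]{Pflug2013}, which already asserts $\tanh(c_\Omega(0,z))=\rho_\Omega(z)$ for balanced convex domains. What you have done is unpack and reprove that citation: your Hahn--Banach functional $\ell$ with $\ell(z)=\rho_\Omega(z)$ and $|\ell|\le\rho_\Omega$ is precisely the standard proof of the Jarnicki--Pflug proposition, and the linear disk $\lambda\mapsto \lambda z/\rho_\Omega(z)$ supplies the complementary upper bound for $k_\Omega$. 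Note, moreover, that your appeal to Lempert's theorem is actually redundant: the chain $\rho_\Omega(z)\le\tanh(c_\Omega(0,z))\le\tanh(k_\Omega(0,z))\le\rho_\Omega(z)$, using only the general inequality $c_\Omega\le k_\Omega$, already forces all three quantities to coincide, so you recover $c_\Omega(0,z)=k_\Omega(0,z)$ at the base point elementarily --- in effect you verify that the linear disk through $0$ and $z$ is a complex geodesic, which is the only fragment of Lempert's theorem the lemma needs. As for what each approach buys: the paper's citation is short and defers the normed-space structure to its separate Lemma \ref{lmn} (whose proof your norm verification duplicates almost verbatim), while your argument is self-contained, avoids the deep Lempert theorem entirely, and makes visible exactly where each hypothesis on $\Omega$ --- boundedness (positive definiteness), balancedness (homogeneity, hence the disk), convexity (triangle inequality, hence Hahn--Banach) --- enters.
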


We will also need the following basic fact.

\begin{lem}\label{lmn}
$\rho_\Omega$ is a $\cv$-norm.
\end{lem}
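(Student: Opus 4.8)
The plan is to show that the Minkowski function $\rho_\Omega$ of a bounded, balanced, convex domain $\Omega$ satisfies the three defining axioms of a $\cv$-norm: positive definiteness, absolute homogeneity with respect to complex scalars, and the triangle inequality. Three of these follow almost directly from the definition $\rho_\Omega(z)=\inf\{t>0:\ z/t\in\Omega\}$ together with the hypotheses on $\Omega$, so I would dispatch them first and reserve the bulk of the argument for the triangle inequality, which is where convexity genuinely enters.

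First I would verify \emph{finiteness and positive definiteness}. Since $\Omega$ is a domain containing $0$ (after the standard normalization; balancedness forces $0\in\Omega$), it contains a ball $\bv^n(0,\delta)$, so for any $z\neq 0$ the point $z/t$ lies in $\Omega$ once $t$ is large, giving $\rho_\Omega(z)<\infty$. Boundedness of $\Omega$, say $\Omega\subset\bv^n(0,M)$, gives the lower bound $\rho_\Omega(z)\ge \|z\|/M>0$ for $z\neq 0$, while clearly $\rho_\Omega(0)=0$; hence $\rho_\Omega(z)=0$ iff $z=0$. Next I would check \emph{complex homogeneity}: for $\lambda\in\cv$ I want $\rho_\Omega(\lambda z)=|\lambda|\,\rho_\Omega(z)$. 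The balanced hypothesis is exactly what makes this work for \emph{complex} (not merely real) scalars, since $\mu w\in\Omega$ for all $|\mu|\le 1$ whenever $w\in\Omega$. Writing $\lambda z/t=(\lambda/|\lambda|)(z/(t/|\lambda|))$ and using that multiplication by the unimodular factor $\lambda/|\lambda|$ preserves $\Omega$, a short rescaling of the defining infimum yields $\rho_\Omega(\lambda z)=|\lambda|\rho_\Omega(z)$ for $\lambda\neq 0$, and the case $\lambda=0$ is trivial. This step is routine but is the reason the conclusion is a $\cv$-norm rather than just an $\rv$-seminorm.

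The substantive step is the \emph{triangle inequality} $\rho_\Omega(z+w)\le\rho_\Omega(z)+\rho_\Omega(w)$, and this is where I expect the only real work. The clean way is to use the identity $\Omega=\{\rho_\Omega<1\}$ (recorded in the excerpt) together with convexity. Given $z,w$, set $a=\rho_\Omega(z)$ and $b=\rho_\Omega(w)$; for any $\epsilon>0$ the points $z/(a+\epsilon)$ and $w/(b+\epsilon)$ lie in $\Omega$ by definition of the infimum. Form the convex combination with weights $\frac{a+\epsilon}{a+b+2\epsilon}$ and $\frac{b+\epsilon}{a+b+2\epsilon}$, which sum to $1$; convexity of $\Omega$ places this combination, namely $(z+w)/(a+b+2\epsilon)$, in $\Omega$. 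Hence $\rho_\Omega\big((z+w)/(a+b+2\epsilon)\big)<1$, and by the homogeneity just established this gives $\rho_\Omega(z+w)\le a+b+2\epsilon$; letting $\epsilon\to 0$ yields the triangle inequality. (Equivalently one can phrase the whole argument through Lemma~\ref{lnk}, since $\rho_\Omega(z)=\tanh(k_\Omega(0,z))$ and the metric interpretation makes the defining set of the infimum transparent, but the direct convexity argument is the most economical.) The main obstacle is purely bookkeeping: one must handle the degenerate cases $a=0$ or $b=0$ and the endpoint behavior of the infimum carefully, but no delicate estimate is required once convexity is invoked on the correct convex combination.
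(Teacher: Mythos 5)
Your proposal is correct and follows essentially the same route as the paper: the triangle inequality is established by exactly the same device, placing $z/(a+\epsilon)$ and $w/(b+\epsilon)$ in $\Omega$ and using convexity on the weighted combination $(z+w)/(a+b+2\epsilon)$, then letting $\epsilon\to 0$. Your explicit verifications of positive definiteness via boundedness and of complex homogeneity via balancedness are points the paper treats as immediate (and note that the $\epsilon$-perturbation already makes the ``degenerate cases'' $a=0$ or $b=0$ require no separate treatment), so no substantive difference remains.
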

\begin{proof}
For any $z_1$, $z_2\in \cv^n$, we want to show that $\rho_\Omega(z_1+z_2)\le \rho_\Omega(z_1)+\rho_\Omega(z_2)$.

Fix $\epsilon >0$. Take $c_1=\rho_\Omega(z_1)+\epsilon/2$ and $c_2=\rho_\Omega(z_2)+\epsilon/2$, Then $z_1/c_1 \in \Omega$ and $z_2/c_2 \in \Omega$. Since $\Omega$ is convex, we get
$$\frac{z_1+z_2}{c_1+c_2}=\frac{c_1}{c_1+c_2}\frac{z_1}{c_1}+\frac{c_2}{c_1+c_2}\frac{z_2}{c_2}\in \Omega$$
Hence, $\rho_\Omega( z_1+z_2)\le c_1+c_2 \le \rho_\Omega(z_1)+ \rho_\Omega(z_2)+\epsilon$. Since $\epsilon$ is arbitrary, we obtain $\rho_\Omega( z_1+z_2)\le \rho_\Omega(z_1)+ \rho_\Omega(z_2)$.

Since $\Omega$ is bounded, it is obvious that $\rho_\Omega(z)>0$ for all $z\neq 0$, which completes the proof.
\end{proof}

We say that $f\in \oc_u(D,\Omega)$ is an \textit{extremal map} at $z\in D$ if $\Omega(s_D^\Omega(z))\subset f(D)$. When $\Omega=\bv^n$, the existence of extremal maps was given in \cite[Theorem 2.1]{Deng2012}. The proof of the next theorem is very similar to that of Theorem \ref{tee} and \cite[Theorem 2.1]{Deng2012}, based on Montel's theorem and the generalized Hurwitz theorem, so we omit the details.

\begin{thm}\label{tse}
An extremal map exists at each $z\in D$.
\end{thm}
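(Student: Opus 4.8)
The plan is to mimic the proof of Theorem \ref{tee}, extracting a limit map via Montel's theorem and showing injectivity via the generalized Hurwitz theorem (Lemma \ref{lh}), with the Minkowski-function machinery (Lemmas \ref{lnk} and \ref{lmn}) used to control the model side. Fix $z\in D$. By the definition of $s_D^\Omega(z)$, there is a sequence $f_i\in\oc_u(D,\Omega)$ with $f_i(z)=0$ and an increasing sequence $r_i\nearrow s_D^\Omega(z)$ such that $\Omega(r_i)\subset f_i(D)$. Since $\Omega$ is bounded, Montel's theorem gives a subsequence (which I relabel as $f_i$) converging uniformly on compact subsets of $D$ to some holomorphic $f:D\to\overline{\Omega}$ with $f(z)=0$.

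The key point is to show $f$ is injective and that $\Omega(s_D^\Omega(z))\subset f(D)$. For injectivity I first rule out the degenerate alternative in Lemma \ref{lh}, namely $\det f'(z)\equiv 0$. To do this I would follow the Theorem \ref{tee} strategy on the inverse maps: set $g_i:=f_i^{-1}|_{\Omega(r_i)}$, which are well-defined injective holomorphic maps into $D$ because $\Omega(r_i)\subset f_i(D)$. Since $D$ is bounded, Montel's theorem produces (after passing to a further subsequence) a limit $g:\Omega(s_D^\Omega(z))\to\overline{D}$, where I use that the domains $\Omega(r_i)$ increase to $\Omega(s_D^\Omega(z))$ — this nestedness follows since $\rho_\Omega$ is a norm (Lemma \ref{lmn}), so $\Omega(r)\subset\Omega(r')$ for $r<r'$, and any compact subset of $\Omega(s_D^\Omega(z))$ lies in $\Omega(r_i)$ for large $i$. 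Fixing a small ball $\bv^n(z,s)\subset D$ and applying Cauchy's inequality to the uniformly bounded family $f_i$ yields $|\det f_i'(z)|<c$, hence $|\det g_i'(0)|>1/c$ for all $i$, so $|\det g'(0)|>0$ and correspondingly $|\det f'(z)|>0$. By Lemma \ref{lh} both $f$ and $g$ are injective; in particular $f(D)\subset\Omega$ and $g(\Omega(s_D^\Omega(z)))\subset D$, and the relation $f\circ g(w)=w$ on $\Omega(s_D^\Omega(z))$ forces $\Omega(s_D^\Omega(z))\subset f(D)$.

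The main obstacle, and the point needing the most care, is the interchange of limits that justifies $f\circ g=\mathrm{id}$ on the \emph{open} domain $\Omega(s_D^\Omega(z))$, since $g$ maps into $\overline{D}$ rather than $D$ a priori and $f$ is initially only defined on $D$. Here injectivity of $g$ (hence $g(\Omega(s_D^\Omega(z)))$ open in $D$ by the open mapping property in several variables) guarantees $g$ lands inside $D$, so that $f\circ g$ makes sense, and passing to the limit in $f_i\circ g_i(w)=w$ on compact subsets gives the identity. I would also need to confirm that the Cauchy-inequality bound applies uniformly: this requires $\bv^n(z,s)\subset\Omega(r_i)$ under $f_i$, i.e. that $f_i(\bv^n(z,s))$ stays in a fixed compact subset, which holds for $i$ large because the relevant compact set is eventually contained in every $\Omega(r_i)$. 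Finally, since $\Omega(s_D^\Omega(z))\subset f(D)$ with $f$ injective and $f(z)=0$, the map $f$ realizes the supremum in the definition of $s_D^\Omega(z)$, so $f$ is the desired extremal map.
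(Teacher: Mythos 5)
Your argument is exactly the proof the paper has in mind: the paper omits the details of Theorem \ref{tse} precisely because they are, as you carry out, the adaptation of the Montel-plus-generalized-Hurwitz scheme of Theorem \ref{tee} (and of Deng--Guan--Zhang), with the Kobayashi balls replaced by the increasing sublevel sets $\Omega(r_i)$ exhausting $\Omega(s_D^\Omega(z))$. Your handling of the two-sided Jacobian bounds via Cauchy's inequality and of the passage to the limit in $f_i\circ g_i=\mathrm{id}$ is at least as careful as the paper's own treatment of the corresponding steps in Theorem \ref{tee}, so the proposal is correct and takes essentially the same route.
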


As an immediate corollary, we have

\begin{cor}
$s_D^\Omega(z)=1$ for some $z\in D$ if and only if $D$ is biholomorphically equivalent to $\Omega$.
\end{cor}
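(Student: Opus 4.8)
The plan is to deduce both implications almost immediately from the existence of extremal maps provided by Theorem \ref{tse}, together with the elementary observation that $\Omega(1)$ coincides with $\Omega$ (since $\Omega=\{z:\rho_\Omega(z)<1\}$ by the definition of the Minkowski function, extending the notation $\Omega(r)$ to the value $r=1$).

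For the ``only if'' direction, I would start from the hypothesis $s_D^\Omega(z)=1$ for some $z\in D$ and invoke Theorem \ref{tse} to obtain an extremal map $f\in\oc_u(D,\Omega)$ at $z$, so that $\Omega(s_D^\Omega(z))=\Omega(1)=\Omega\subset f(D)$. On the other hand $f$ maps $D$ into $\Omega$, whence $f(D)\subset\Omega$, and therefore $f(D)=\Omega$. Since $f$ is an injective holomorphic map whose image is all of $\Omega$, it is a biholomorphism from $D$ onto $\Omega$ (an injective holomorphic map between equidimensional domains has nowhere-vanishing Jacobian and is biholomorphic onto its image). This yields the desired biholomorphic equivalence.

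For the ``if'' direction, suppose $\varphi:D\to\Omega$ is a biholomorphism and set $z=\varphi^{-1}(0)$. Then $\varphi\in\oc_u(D,\Omega)$ with $\varphi(z)=0$ and $\varphi(D)=\Omega$, so that $\Omega(r)\subset\Omega=\varphi(D)$ for every $0<r<1$. Hence every such $r$ is admissible in the supremum defining $s_D^\Omega(z)$, which forces $s_D^\Omega(z)\ge 1$; combined with the general bound $s_D^\Omega(z)\le 1$, we conclude $s_D^\Omega(z)=1$.

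I do not anticipate any genuine obstacle: the corollary is essentially immediate once the extremal map is in hand. The only points requiring a word of care are the identification $\Omega(1)=\Omega$ and the passage from an injective holomorphic surjection onto $\Omega$ to a genuine biholomorphism; the substantive content is carried entirely by Theorem \ref{tse}.
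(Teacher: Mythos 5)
Your proof is correct and takes exactly the route the paper intends: the paper states this corollary without proof as an immediate consequence of Theorem \ref{tse}, and your argument---extremal map gives $\Omega(1)=\Omega\subset f(D)$, hence $f$ is a biholomorphism onto $\Omega$, with the trivial converse via a biholomorphism sending some point to $0$ and the general bound $s_D^\Omega\le 1$---is precisely the intended one. The two points you flag (identifying $\Omega(1)$ with $\Omega$ and passing from an injective holomorphic surjection to a biholomorphism) are handled correctly.
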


In \cite[Theorem 3.1]{Deng2012}, it was shown that $s_D(z)$ is continuous. Moreover, it was given in \cite[Theorem 3.2]{Deng2012} without details the following inequality:
$$|s_D(z_1)-s_D(z_2)|\le 2\tanh[k_D(z_1,z_2)],\ \ \ z_1,z_2\in D.$$
Our next theorem gives the same inequality for generalized squeezing functions, and in particular shows that they are also continuous.

\begin{thm}\label{tsc}
For any $z_1,z_2\in D$, we have
$$|s_D^\Omega(z_1)-s_D^\Omega(z_2)|\le 2\tanh[k_D(z_1,z_2)].$$
In particular, $s_D^\Omega(z)$ is continuous.
\end{thm}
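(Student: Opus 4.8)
The plan is to run the same competitor-construction strategy used for the classical squeezing function, but to replace the automorphisms of the model domain (which are unavailable here, since $\Omega$ is only assumed bounded, balanced and convex, not homogeneous) by an explicit affine map built from the Minkowski functional. First I would fix $z_1,z_2\in D$ and invoke Theorem \ref{tse} to obtain an extremal map $f\in \oc_u(D,\Omega)$ at $z_1$, so that $f(z_1)=0$ and $\Omega(s_D^\Omega(z_1))\subset f(D)$. Writing $w:=f(z_2)$ and $\delta:=\rho_\Omega(w)$, the decreasing property of the Kobayashi distance together with Lemma \ref{lnk} gives $\delta=\tanh(k_\Omega(0,w))=\tanh(k_\Omega(f(z_1),f(z_2)))\le \tanh[k_D(z_1,z_2)]$; this is the only place the hypothesis enters, and it pins $\delta$ to the quantity on the right-hand side.

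Next I would construct the competitor $g(z):=(f(z)-w)/(1+\delta)$. It is injective (a composition of $f$ with an affine isomorphism) and satisfies $g(z_2)=0$. To see that $g$ maps $D$ into $\Omega$, I would use that $\rho_\Omega$ is a $\cv$-norm (Lemma \ref{lmn}): for $\zeta\in f(D)\subset \Omega$ we have $\rho_\Omega((\zeta-w)/(1+\delta))\le (\rho_\Omega(\zeta)+\rho_\Omega(w))/(1+\delta)<(1+\delta)/(1+\delta)=1$, where $\rho_\Omega(-w)=\rho_\Omega(w)$ by homogeneity of the norm. Thus $g\in \oc_u(D,\Omega)$ with $g(z_2)=0$, so $g$ is admissible in the definition of $s_D^\Omega(z_2)$.

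It then remains to estimate how large a dilate $\Omega(r)$ sits inside $g(D)=(f(D)-w)/(1+\delta)\supset (\Omega(s_D^\Omega(z_1))-w)/(1+\delta)$. For $\eta$ with $\rho_\Omega(\eta)<r$, the triangle inequality gives $\rho_\Omega((1+\delta)\eta+w)<(1+\delta)r+\delta$, which is $\le s_D^\Omega(z_1)$ precisely when $r\le (s_D^\Omega(z_1)-\delta)/(1+\delta)$; for such $r$ we get $(1+\delta)\eta+w\in \Omega(s_D^\Omega(z_1))\subset f(D)$, i.e.\ $\eta\in g(D)$. Hence $s_D^\Omega(z_2)\ge (s_D^\Omega(z_1)-\delta)/(1+\delta)$ (this is vacuous but still correct when $\delta\ge s_D^\Omega(z_1)$, in which case the final bound holds trivially since $s_D^\Omega(z_1)\le \delta$). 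A one-line computation using $s_D^\Omega(z_1)\le 1$ then yields $s_D^\Omega(z_1)-s_D^\Omega(z_2)\le \delta(1+s_D^\Omega(z_1))/(1+\delta)\le 2\delta\le 2\tanh[k_D(z_1,z_2)]$.

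Finally, exchanging the roles of $z_1$ and $z_2$ (applying the extremal map at $z_2$) gives the reverse inequality and hence the absolute-value bound; continuity of $s_D^\Omega$ is then immediate from the continuity of $k_D$ and $k_D(z,z)=0$. The main obstacle, and the whole point of the argument, is the passage from automorphisms to the affine map $g$: everything hinges on $\rho_\Omega$ being a genuine norm, which is exactly what makes both the inclusion $g(D)\subset \Omega$ and the inclusion of the shrunken copy $\Omega((s_D^\Omega(z_1)-\delta)/(1+\delta))$ go through. I expect the only delicate bookkeeping to be confirming that the factor $2$ is not lost, i.e.\ that $(1+s_D^\Omega(z_1))/(1+\delta)\le 2$, which follows from $s_D^\Omega(z_1)\le 1$ and $\delta\ge 0$.
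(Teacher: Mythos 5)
Your proof is correct and follows essentially the same route as the paper's: an extremal map at $z_1$ from Theorem \ref{tse}, the affine competitor $z\mapsto (f(z)-f(z_2))/(1+\cdot)$, the norm property of $\rho_\Omega$ (Lemma \ref{lmn}) combined with Lemma \ref{lnk} to control $\rho_\Omega(f(z_2))$, the same trivial case when $\tanh[k_D(z_1,z_2)]\ge s_D^\Omega(z_1)$, and the same final arithmetic using $s_D^\Omega(z_1)\le 1$. The only (harmless) deviation is that you normalize by $1+\rho_\Omega(f(z_2))$ instead of the paper's $1+\tanh[k_D(z_1,z_2)]$, which yields a marginally sharper intermediate lower bound on $s_D^\Omega(z_2)$ but the identical conclusion.
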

\begin{proof}
By Theorem \ref{tse}, there exists a holomorphic embedding $f:D \rightarrow \Omega$ such that $f(z_1)=0$ and $\Omega(s_D^\Omega(z_1)) \subset f(D)$.

If $\tanh[k_D(z_1,z_2)]\ge s_D^\Omega(z_1)$, then it is obvious that
$$s_D^\Omega(z_2)>0\ge \frac{s_D^\Omega(z_1)-\tanh[k_D(z_1,z_2)]}{1+\tanh[k_D(z_1,z_2)]}.$$

Suppose now that $\tanh[k_D(z_1,z_2)]<s_D^\Omega(z_1)$. By the decreasing property of the Kobayashi distance and Lemma \ref{lnk}, we have
$$\begin{aligned}
s_D^\Omega(z_1)&>\tanh[k_D(z_1,z_2)]=\tanh[k_{f(D)}(f(z_1),f(z_2))]\\
&\ge \tanh[k_\Omega(f(z_1),f(z_2))]=\tanh[k_\Omega(0,f(z_2))]=\rho_\Omega(f(z_2)).
\end{aligned}$$
Define
$$h(w):=\frac{w-f(z_2)}{1+\tanh[k_D(z_1,z_2)]},$$
and set $g(z)=h\circ f(z)$. Then $g\in \oc_u(D,\Omega)$ and $g(z_2)=0$.

For any $w\in \Omega$ with
$$\rho_\Omega(w)<\frac{s_D^\Omega(z_1)-\tanh[k_D(z_1,z_2)]}{1+\tanh[k_D(z_1,z_2)]},$$
we have 
$$\rho_\Omega(h^{-1}(w)-f(z_2))=\rho_\Omega(h^{-1}(w)-h^{-1}(g(z_2)))<s_D^\Omega(z_1)-\tanh[k_D(z_1,z_2)].$$
Since $\rho _\Omega(z)$ is a $\cv$-norm by Lemma \ref{lmn}, we get
\begin{align*}
\rho_\Omega(h^{-1}(w))=\rho_\Omega(h^{-1}(w)-f(z_1))
& \le \rho_\Omega(h^{-1}(w)-f(z_2))+\rho_\Omega(f(z_2)-f(z_1))\\
&< s_D^\Omega(z_1)-\tanh[k_D(z_1,z_2)]+\rho_\Omega(f(z_2))\\
&\le s_D^\Omega(z_1).
\end{align*}
This implies that
$$\Omega\left(\frac{s_D^\Omega(z_1)-\tanh[k_D(z_1,z_2)]}{1+\tanh[k_D(z_1,z_2)]}\right)\subset h(\Omega(s_D^\Omega(z_1)))\subset g(D).$$
So we have
$$s_D^\Omega(z_2)\ge \frac{s_D^\Omega(z_1)-\tanh[k_D(z_1,z_2)]}{1+\tanh[k_D(z_1,z_2)]}.$$
Hence,
$$s_D^\Omega(z_1)\le s_D^\Omega(z_2)+(s_D^\Omega(z_2)+1)\tanh[k_D(z_1,z_2)]\le s_D^\Omega(z_2)+2\tanh[k_D(z_1,z_2)].$$
Similarly,
$$s_D^\Omega(z_2)\le s_D^\Omega(z_1)+2\tanh[k_D(z_1,z_2)].$$
Therefore, $|s_D^\Omega(z_1)-s_D^\Omega(z_2)|\le 2\tanh[k_D(z_1,z_2)]$ for all $z_1,z_2\in D$.

Since the Kobayashi distance is continuous (see e.g. \cite{Pflug2013}), we get that $s_D^\Omega(z)$ is continuous.
\end{proof}

In case that $\Omega$ is homogeneous, we have better estimates as follows.
 
\begin{thm}\label{tscb}
If $\Omega$ is bounded, balanced, convex and homogeneous, then for any $z_1,z_2\in D$, we have
$$|s_D^\Omega(z_1)-s_D^\Omega(z_2)|\le \tanh[k_D(z_1,z_2)].$$
\end{thm}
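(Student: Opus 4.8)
The plan is to mirror the proof of Theorem~\ref{tsc}, but to exploit the homogeneity of $\Omega$ so as to replace the contracting affine map $h$ used there by a \emph{distance-preserving} automorphism of $\Omega$; this is exactly what removes the extra factor of $2$. Write $t=\tanh[k_D(z_1,z_2)]$ and $s=s_D^\Omega(z_1)$. First I would invoke Theorem~\ref{tse} to obtain an extremal embedding $f:D\to\Omega$ with $f(z_1)=0$ and $\Omega(s)\subset f(D)$. By the decreasing property of the Kobayashi distance together with Lemma~\ref{lnk}, one has $\rho_\Omega(f(z_2))=\tanh[k_\Omega(0,f(z_2))]\le\tanh[k_D(z_1,z_2)]=t$.

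The crucial observation is that, by Lemma~\ref{lnk}, the model sublevel set is itself a Kobayashi ball: $\Omega(r)=\{w:\rho_\Omega(w)<r\}=\{w:k_\Omega(0,w)<\arctanh(r)\}=B_\Omega^k(0,\arctanh(r))$. Since $\Omega$ is homogeneous, I would choose an automorphism $\phi\in\aut(\Omega)$ with $\phi(f(z_2))=0$ and set $g:=\phi\circ f\in\oc_u(D,\Omega)$, so that $g(z_2)=0$ and $\Omega(s)\subset f(D)$ yields $\phi(\Omega(s))\subset g(D)$. Because every automorphism of $\Omega$ is a $k_\Omega$-isometry, $\phi$ carries the ball $\Omega(s)=B_\Omega^k(0,\arctanh(s))$ onto the ball of the \emph{same} radius centered at $p:=\phi(0)$, namely $\phi(\Omega(s))=B_\Omega^k(p,\arctanh(s))$; moreover the isometry property and Lemma~\ref{lnk} give $\rho_\Omega(p)=\rho_\Omega(f(z_2))\le t$.

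It then remains to locate the largest model ball $\Omega(s')$ centered at the origin inside $B_\Omega^k(p,\arctanh(s))$. By the triangle inequality for $k_\Omega$, if $k_\Omega(0,w)<\arctanh(s)-k_\Omega(0,p)$ then $k_\Omega(p,w)<\arctanh(s)$, so $\Omega(s')\subset\phi(\Omega(s))\subset g(D)$ with $s'=\tanh[\arctanh(s)-k_\Omega(0,p)]$. Writing $\rho:=\rho_\Omega(p)=\tanh[k_\Omega(0,p)]$ and using the subtraction formula for $\tanh$, one gets $s'=(s-\rho)/(1-s\rho)\ge s-\rho\ge s-t$ (the middle inequality because $0\le s\rho<1$; the case $t\ge s$ being trivial since $s_D^\Omega(z_2)>0$). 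Hence $s_D^\Omega(z_2)\ge s'\ge s-t$, i.e.\ $s_D^\Omega(z_1)-s_D^\Omega(z_2)\le\tanh[k_D(z_1,z_2)]$, and the symmetric argument supplies the absolute value.

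I do not anticipate a serious obstacle: once one recognizes that homogeneity supplies a $k_\Omega$-isometry normalizing $f(z_2)$ to the origin, the transported ball retains its radius and only its center moves, by $k_\Omega(0,p)\le\arctanh(t)$, so the loss is a single additive $t$ rather than the $2t$ produced by the norm-contracting map $h$ of Theorem~\ref{tsc}. The only points needing care are the identification $\Omega(r)=B_\Omega^k(0,\arctanh(r))$ (which rests on Lempert's theorem through Lemma~\ref{lnk}) and handling the degenerate case $t\ge s$ separately.
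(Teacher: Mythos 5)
Your proof is correct and follows essentially the same route as the paper's: both use Theorem~\ref{tse} to get an extremal map $f$ with $f(z_1)=0$, homogeneity to produce an automorphism sending $f(z_2)$ to $0$, and the identification $\Omega(r)=B_\Omega^k(0,\arctanh r)$ from Lemma~\ref{lnk} together with the Kobayashi triangle inequality to shrink the radius by at most $t$. The only cosmetic difference is that you handle the final estimate via the exact $\tanh$ subtraction formula $s'=(s-\rho)/(1-s\rho)\ge s-\rho$, where the paper invokes its Lemma~\ref{ltanh} on subadditivity of $\tanh$; these are equivalent bits of arithmetic.
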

\begin{proof}
By Theorem \ref{tse}, there exists a holomorphic embedding $f:D \rightarrow \Omega$ such that $f(z_1)=0$ and $\Omega(s_D^\Omega(z_1)) \subset f(D)$.

If $\tanh[k_D(z_1,z_2)]\ge s_D^\Omega(z_1)$, then it is obvious that
$$s_D^\Omega(z_2)>0\ge s_D^\Omega(z_1)-\tanh[k_D(z_1,z_2)].$$

Suppose now that $\tanh[k_D(z_1,z_2)]<s_D^\Omega(z_1)$. Since $\Omega$ is homogeneous, there exists $\psi \in \aut(\Omega)$ such that $\psi \circ f(z_2)=0$.

For any $w\in \Omega$ with
$$\tanh[k_{\psi(\Omega)}(w,\psi \circ f(z_2))]<s_D^\Omega(z_1)-\tanh[k_D(z_1,z_2)],$$
by the decreasing property of the Kobayashi distance and Lemma \ref{ltanh}, we have
\begin{align*}
\tanh[k_\Omega(\psi^{-1}(w),f(z_1))]
& \le \tanh[k_\Omega(\psi^{-1}(w),f(z_2))]+\tanh[k_\Omega(f(z_2),f(z_1))]\\
& \le \tanh[k_{\psi(\Omega)}(w,\psi \circ f(z_2))]+\tanh[k_{f(D)}(f(z_2),f(z_1))]\\
& < s_D^\Omega(z_1)-\tanh[k_D(z_1,z_2)]+\tanh[k_D(z_1,z_2)]\\
& =s_D^\Omega(z_1).
\end{align*}
By Lemma \ref{lnk}, this implies that $\psi^{-1}(w)\in \Omega(s_D^\Omega(z_1))$. Hence,
$$\{w:\tanh[k_{\psi(\Omega)}(w,\psi \circ f(z_2))]<s_D^\Omega(z_1)-\tanh[k_D(z_1,z_2)]\} \subset \psi(\Omega(s_D^\Omega(z_1))).$$ 
Since $\psi \circ f(z_2)=0$, again by Lemma \ref{lnk}, we have
$$\Omega(s_D^\Omega(z_1)-\tanh[k_D(z_1,z_2)])\subset \psi(\Omega(s_D^\Omega(z_1)))\subset \psi\circ f(D).$$
Thus,
$$s_D^\Omega(z_2)\ge s_D^\Omega(z_1)-\tanh[k_D(z_1,z_2)].$$
Similarly,
$$s_D^\Omega(z_1)\ge s_D^\Omega(z_2)-\tanh[k_D(z_1,z_2)].$$
Therefore, $|s_D^\Omega(z_1)-s_D^\Omega(z_2)|\le \tanh[k_D(z_1,z_2)]$ for all $z_1,z_2\in D$.
\end{proof}

The following stability result generalizes and slightly refines \cite[Theorem2.1]{Deng2016}.

\begin{thm}
If $\{D_l\}_{l\ge 1}$ exhausts $D$, then $\ds \lim_{l\rightarrow \infty} s_{D_l}^\Omega(z)=s_D^\Omega(z)$ uniformly on compact subsets of $D$.
\end{thm}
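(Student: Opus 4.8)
The plan is to first establish \emph{pointwise} convergence $\ds\lim_{l\to\infty}s_{D_l}^\Omega(z)=s_D^\Omega(z)$ at each fixed $z\in D$ (noting that $z\in D_l$ for $l$ large, so $s_{D_l}^\Omega(z)$ is eventually defined), and then to upgrade this to locally uniform convergence by repeating verbatim the equicontinuity argument in the proof of Corollary \ref{cek}, with the Lipschitz estimate of Theorem \ref{tsc} used in place of Theorem \ref{tec}. For the pointwise statement I would prove the two inequalities $\ds\liminf_{l\to\infty}s_{D_l}^\Omega(z)\ge s_D^\Omega(z)$ and $\ds\limsup_{l\to\infty}s_{D_l}^\Omega(z)\le s_D^\Omega(z)$ separately.

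The lower inequality is the easy direction and uses only restriction of an extremal map. By Theorem \ref{tse} choose an embedding $f:D\to\Omega$ with $f(z)=0$ and $\Omega(s_D^\Omega(z))\subset f(D)$. Fix $0<\epsilon<s_D^\Omega(z)$. Since $\rho_\Omega$ is a continuous norm on the bounded domain $\Omega$ by Lemma \ref{lmn}, the set $\overline{\Omega(s_D^\Omega(z)-\epsilon)}$ is compact and contained in $\Omega(s_D^\Omega(z))\subset f(D)$; as $f$ is a biholomorphism onto $f(D)$, its preimage $K:=f^{-1}(\overline{\Omega(s_D^\Omega(z)-\epsilon)})$ is a compact subset of $D$. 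Hence $K\subset D_l$ for all large $l$, and then $f|_{D_l}\in\oc_u(D_l,\Omega)$ satisfies $f(z)=0$ and $\Omega(s_D^\Omega(z)-\epsilon)\subset f(K)\subset f(D_l)$, so $s_{D_l}^\Omega(z)\ge s_D^\Omega(z)-\epsilon$. Letting $l\to\infty$ and then $\epsilon\to0$ gives the claim.

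The upper inequality is the main obstacle and follows the normal-families scheme of Theorem \ref{tes}. Set $s:=\ds\limsup_{l\to\infty}s_{D_l}^\Omega(z)$ and pass to a subsequence realizing it. For each term take an extremal map $f_l\in\oc_u(D_l,\Omega)$ with $f_l(z)=0$, $\Omega(s_{D_l}^\Omega(z))\subset f_l(D_l)$, together with $g_l:=f_l^{-1}$ on $\Omega(s_{D_l}^\Omega(z))$. Since $\Omega$ is bounded, Montel's theorem yields a limit $f:D\to\bar\Omega$, uniformly on compacta (each compact of $D$ lies in $D_l$ eventually); and for any fixed $s'<s$ the inclusion $\Omega(s')\subset\Omega(s_{D_l}^\Omega(z))$ holds for large $l$, so, $D$ being bounded, Montel gives a limit $g:\Omega(s')\to\bar D$. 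The key point, exactly as in Theorem \ref{tes}, is injectivity of the limits: Cauchy estimates on a fixed small ball about $z$ inside $D_l$ and about $0$ inside $\Omega(s')$ bound $|\det f_l'(z)|$ and $|\det g_l'(0)|$, while $f_l\circ g_l=\mathrm{id}$ forces $\det f_l'(z)\det g_l'(0)=1$; hence both determinants stay bounded away from $0$, and the generalized Hurwitz theorem (Lemma \ref{lh}) shows $f$ and $g$ are injective with $f\circ g=\mathrm{id}$ on $\Omega(s')$. Therefore $\Omega(s')\subset f(D)$, giving $s_D^\Omega(z)\ge s'$; letting $s'\uparrow s$ yields $s_D^\Omega(z)\ge s$.

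Combining the two inequalities gives pointwise convergence. Finally, for a compact $K\subset D$ and $z\in K$, I would split
\[
|s_D^\Omega(z)-s_{D_l}^\Omega(z)|\le|s_D^\Omega(z)-s_D^\Omega(z_i)|+|s_D^\Omega(z_i)-s_{D_l}^\Omega(z_i)|+|s_{D_l}^\Omega(z_i)-s_{D_l}^\Omega(z)|
\]
over a finite $\delta$-net $\{z_i\}$ of $K$, and bound the first and third terms by $2\tanh[k_D(z,z_i)]$ and $2\tanh[k_{D_l}(z,z_i)]\le2\tanh[k_{\bv^n(z_i,r)}(z,z_i)]$ via Theorem \ref{tsc} and the decreasing property of the Kobayashi distance, choosing $\delta$ small; the middle term is controlled by pointwise convergence at the finitely many $z_i$. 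This is the uniform statement, and the only genuinely delicate step is the injectivity of the limit maps in the $\limsup$ direction.
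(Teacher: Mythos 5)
Your proposal is correct and takes essentially the same route as the paper: your pointwise step fills in exactly the Montel--Hurwitz argument that the paper delegates to Deng--Guan--Zhang (and which mirrors its own Theorem \ref{tes}), and your uniformity step is the same Lipschitz-plus-finite-net mechanism based on Theorem \ref{tsc}, merely run directly rather than as the paper's proof by contradiction along a convergent subsequence. The one point to state explicitly when writing it up is that the injective limit maps satisfy $f(D)\subset\Omega$ and $g(\Omega(s'))\subset D$ (openness of injective holomorphic maps), which the paper glosses over in the same way.
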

\begin{proof}
Since the proof of the convergence is similar to that of \cite[Theorem2.1]{Deng2016}, we omit the details.

Now suppose that the convergence is not uniform on compact subsets of $D$. Then, there exist a compact subset $K\subset D$, $\epsilon>0$, a subsequence $\{l_j\}$ and $z_{l_j}\in K\subset D_{l_j}$ such that
$$|s^\Omega_{D_{l_j}}(z_{l_j})-s_D^\Omega(z_{l_j})|\ge \epsilon.$$
Since $K$ is compact, there exists a convergent subsequence, again denoted by $\{z_{l_j}\}$, with $\lim_{j\rightarrow \infty} z_{l_j}=z\in K$. Choose $r>0$ such that $\overline{\bv^n(z,r)}\subset D$. Then, there is $N_1>0$ such that $z_{l_j}\in \bv^n(z,r)\subset D_{l_j}$ for all $l_j>N_1$. By Theorem \ref{tsc} and the decreasing property of the Kobayashi distance, for all $l_j>N_1$ we have
\begin{align*}
|s^\Omega_{D_{l_j}}(z_{l_j})-s_D^\Omega(z_{l_j})|
& \le |s^\Omega_{D_{l_j}}(z_{l_j})-s^\Omega_{D_{l_j}}(z)|+|s^\Omega_{D_{l_j}}(z)-s_D^\Omega(z)|+|s_D^\Omega(z)-s_D^\Omega(z_{l_j})|\\
&\le 2\tanh[k_{D_{l_j}}(z_{l_j},z)]+|s^\Omega_{D_{l_j}}(z)-s_D^\Omega(z)|+2\tanh[k_D(z,z_{l_j})]\\
& \le 4\tanh\left(\frac{\|z_{l_j}-z\|}{r}\right)+|s^\Omega_{D_{l_j}}(z)-s_D^\Omega(z)|.
\end{align*}
It is clear that there is $N_2>0$ such that for all $l_j>N_2$ we have
$$\tanh\left(\frac{\|z_{l_j}-z\|}{r}\right)<\frac{\epsilon}{6}\ \ \textup{and}\ \ |s^\Omega_{D_{l_j}}(z)-s_D^\Omega(z)|<\frac{\epsilon}{3}.$$
Set $N=\max\{N_1,N_2\}$. Then for all $l_j>N$ we have
$$|s^\Omega_{D_{l_j}}(z_{l_j})-s_D^\Omega(z_{l_j})|<\epsilon,$$
which is a contradiction.
\end{proof}

The notion of the squeezing function was originally introduced to study the ``uniform squeezing" property. In this regard, we have the following

\begin{thm}\label{te}
For two bounded, balanced and convex domains $\Omega_1$ and $\Omega_2$ in $\cv^n$, $s^{\Omega_1}_D(z)$ has a positive lower bound if and only if $s^{\Omega_2}_D(z)$ has a positive lower bound.
\end{thm}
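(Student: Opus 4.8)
The plan is to exploit the fact that both Minkowski functions $\rho_{\Omega_1}$ and $\rho_{\Omega_2}$ are $\cv$-norms on $\cv^n$ (Lemma \ref{lmn}), together with the equivalence of any two norms in finite dimensions, to transport an extremal embedding into $\Omega_1$ to an embedding into $\Omega_2$ by a single linear dilation. By symmetry it suffices to prove one implication, so I would assume that $a:=\inf_{z\in D}s_D^{\Omega_1}(z)>0$ and produce a positive lower bound for $s_D^{\Omega_2}$.

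First I would record the norm comparison: there exist constants $0<\alpha\le\beta$ with $\alpha\rho_{\Omega_2}\le\rho_{\Omega_1}\le\beta\rho_{\Omega_2}$ on $\cv^n$. Reading these inequalities at the level of sublevel sets (writing $\Omega_i(r):=\{\rho_{\Omega_i}<r\}$ for every $r>0$) and using the homogeneity $\rho_\Omega(\lambda z)=|\lambda|\rho_\Omega(z)$, I obtain
$$\Omega_2(r/\beta)\subset\Omega_1(r)\subset\Omega_2(r/\alpha),\qquad r>0,$$
from the two inequalities respectively, and in particular $\alpha\,\Omega_1\subset\Omega_2$ (take $r=1$ in the right inclusion and dilate by $\alpha$, since $\alpha\,\Omega_2(1/\alpha)=\Omega_2$).

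Next, for a fixed $z\in D$, I would invoke Theorem \ref{tse} to obtain an extremal map $f\in\oc_u(D,\Omega_1)$ with $f(z)=0$ and $\Omega_1(a)\subset\Omega_1(s_D^{\Omega_1}(z))\subset f(D)$. Setting $g:=\alpha f$, the inclusion $\alpha\,\Omega_1\subset\Omega_2$ guarantees $g\in\oc_u(D,\Omega_2)$ with $g(z)=0$, while dilating the containment $\Omega_1(a)\subset f(D)$ gives $\Omega_1(\alpha a)=\alpha\,\Omega_1(a)\subset g(D)$. Applying the left inclusion above with $r=\alpha a$ then yields $\Omega_2(\alpha a/\beta)\subset\Omega_1(\alpha a)\subset g(D)$, so $s_D^{\Omega_2}(z)\ge\alpha a/\beta$. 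Since $z$ was arbitrary and $\alpha a/\beta$ is independent of $z$, I conclude $\inf_{z\in D}s_D^{\Omega_2}(z)\ge\alpha a/\beta>0$; note $\alpha a/\beta\le 1$ automatically, as $\alpha\le\beta$ and $a\le 1$. Exchanging the roles of $\Omega_1$ and $\Omega_2$ gives the reverse implication.

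The computations are all routine; the one point requiring care — and the conceptual heart of the argument — is that the definition of $s_D^{\Omega_2}$ demands a map whose target is exactly $\Omega_2$, not merely a dilate of it. The resolution is precisely the rescaling $g=\alpha f$, whose admissibility hinges on the finite-dimensional norm equivalence in the form $\alpha\,\Omega_1\subset\Omega_2$; everything else is bookkeeping with the homogeneity of the Minkowski functions. I would also check that no property of $\Omega_1,\Omega_2$ beyond boundedness, balancedness and convexity (exactly what makes each $\rho_{\Omega_i}$ a norm via Lemma \ref{lmn}) is used, so that the hypotheses of the theorem are fully matched.
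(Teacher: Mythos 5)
Your proof is correct, and it rests on the same mechanism as the paper's: Lemma \ref{lmn} makes $\rho_{\Omega_1}$ and $\rho_{\Omega_2}$ norms on $\cv^n$, finite-dimensional norm equivalence supplies the comparison constants, and a linear dilation transports an extremal map (Theorem \ref{tse}) from one model into the other. The execution differs in two ways worth recording. First, the paper reduces to $\Omega_2=\bv^n$ and leaves the transport step as ``one readily checks''; you compare $\Omega_1$ and $\Omega_2$ directly and carry the check out in full, and your argument in fact yields the pointwise bound $s_D^{\Omega_2}(z)\ge(\alpha/\beta)\,s_D^{\Omega_1}(z)$. Second --- and more substantively --- your constants are right while the paper's displayed inequality $s^{\Omega_1}_D(z)/M\le s^{\bv^n}_D(z)\le s^{\Omega_1}_D(z)/m$ is not: it fails to be invariant under the rescaling $\Omega_1\mapsto\lambda\Omega_1$ (which leaves $s^{\Omega_1}_D$ unchanged but scales $m,M$ by $1/\lambda$), and it fails concretely when $D=\Omega_1$ is the bidisc, where $s^{\Omega_1}_D\equiv 1$, $m=1/\sqrt{2}$, $M=1$, so the paper's lower bound would force $s^{\bv^2}_D\equiv 1$ and hence the bidisc biholomorphic to $\bv^2$. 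The correct two-sided comparison is $\frac{m}{M}\,s^{\Omega_1}_D\le s^{\bv^n}_D\le \frac{M}{m}\,s^{\Omega_1}_D$, which is exactly the scale-invariant ratio $\alpha/\beta$ your bookkeeping produces: the rescaling $g=\alpha f$ needed to land inside $\Omega_2$ costs the factor $\alpha$, and re-inserting a $\rho_{\Omega_2}$-sublevel set costs the factor $1/\beta$. Since only positivity of the infimum matters for the theorem, the statement is unaffected either way, but your write-up is the one whose quantitative display actually holds; the one micro-point to retain is your observation that $\alpha a/\beta\le 1$, which ensures the inclusion $\Omega_2(\alpha a/\beta)\subset g(D)$ legitimately feeds into the supremum defining $s_D^{\Omega_2}(z)$.
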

\begin{proof}
It suffices to prove the equivalence when $\Omega_2=\bv^n$. By Lemma \ref{lmn}, $\rho_{\Omega_1}(z)$ is a $\cv$-norm. Thus, it is continuous and there exist $M\ge m>0$ such that $m\|z\|\le \rho_{\Omega_1}(z) \le M\|z\|$. Then, one readily checks using the definition that
$$\frac{s^{\Omega_1}_D(z)}{M}\le s^{\bv_n}_D(z)\le \frac{s^{\Omega_1}_D(z)}{m}.$$
\end{proof}

Combining Theorem \ref{te} with \cite[Theorems 4.5 \& 4.7]{Deng2012}, we have the following

\begin{thm}\label{tckc}
If $s_D^\Omega(z)$ has a positive lower bound, then $D$ is complete with respect to the Carath\'{e}odory distance, the Kobayashi distance and the Bergman distance of $D$.
\end{thm}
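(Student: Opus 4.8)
The plan is to reduce the assertion to the classical squeezing function (the case $\Omega=\bv^n$) and then to quote the completeness results of Deng, Guan and Zhang directly. Under the standing hypotheses of this section, $\Omega$ is bounded, balanced and convex, so $\bv^n$ and $\Omega$ are both admissible ``model domains'' and Theorem \ref{te} applies with $\Omega_1=\Omega$, $\Omega_2=\bv^n$.

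First I would invoke Theorem \ref{te}: by hypothesis $s_D^\Omega(z)$ has a positive lower bound on $D$, so the equivalence in that theorem immediately yields that the ordinary squeezing function $s_D(z)=s_D^{\bv^n}(z)$ also has a positive lower bound on $D$. Concretely, the mechanism behind this reduction is the two-sided comparison $m\|z\|\le \rho_\Omega(z)\le M\|z\|$ furnished by Lemma \ref{lmn} (since $\rho_\Omega$ is a continuous $\cv$-norm), which forces $s_D^\Omega$ and $s_D^{\bv^n}$ to be comparable up to the multiplicative constants $m,M$; a positive lower bound for one therefore gives a positive lower bound for the other.

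Having arranged that $s_D$ is bounded below by a positive constant, I would then apply \cite[Theorems 4.5 \& 4.7]{Deng2012}, which state precisely that a bounded domain whose squeezing function has a positive lower bound is complete with respect to the Carath\'{e}odory distance, the Kobayashi distance and the Bergman distance. Combining this with the previous step gives the conclusion for the general model $\Omega$.

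The argument is short because all of its substance is already packaged into Theorem \ref{te} and the cited theorems; there is no genuine analytic obstacle remaining. The only point requiring attention is bookkeeping of hypotheses: one must check that $\Omega$ meets the requirements (bounded, balanced, convex) needed to invoke Theorem \ref{te} against the model $\bv^n$, which is exactly the standing assumption of this section, so the reduction is legitimate.
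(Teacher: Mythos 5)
Your proposal is correct and matches the paper's argument exactly: the paper derives Theorem \ref{tckc} precisely by combining Theorem \ref{te} (reducing $s_D^\Omega$ to the classical squeezing function $s_D^{\bv^n}$ via the norm comparison $m\|z\|\le\rho_\Omega(z)\le M\|z\|$) with Theorems 4.5 and 4.7 of \cite{Deng2012}. There is nothing to add; your bookkeeping of the hypotheses on $\Omega$ is also the right point to check.
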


\section{Comparison of Fridman invariants and generalized squeezing functions}\label{S:comparison}

Since Fridman invariants and generalized squeezing functions are similar in spirit to the Kobayashi-Eisenman volume form $K_D$ and the Carath\'{e}odory volume form $C_D$, respectively, it is natural to study the comparison of them. For this purpose, we will always assume that $D$ is a bounded domain in $\cv^n$ and $\Omega$ is a bounded, balanced, convex and homogeneous domain in $\cv^n$.

Similar to the classical quotient invariant $M_D(z):=C_D(z)/K_D(z)$, we introduce the quotient $m_D^\Omega(z)=s_D^\Omega(z)/e_D^\Omega(z)$, which is also a biholomorphic invariant. When $\Omega=\bv^n$, we simply write $m_D(z)=s_D(z)/e_D(z)$.

In \cite{Nikolov-Verma2018}, Nikolov and Verma have shown that $m_D(z)$ is always less than or equal to one. The next result shows that the same is true for $m_D^\Omega(z)$.

\begin{thm}\label{te>s}
For any $z\in D$, we have $m_D^\Omega(z)\le 1$.
\end{thm}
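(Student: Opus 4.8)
The plan is to prove the equivalent inequality $s_D^\Omega(z)\le e_D^\Omega(z)$ for every $z\in D$; since $D$ is bounded we have $e_D^\Omega(z)>0$, so dividing gives $m_D^\Omega(z)\le 1$. The idea is to manufacture, out of a squeezing extremal map, an admissible competitor for the Fridman invariant that forces $e_D^\Omega(z)$ to be at least $s_D^\Omega(z)$.

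First I would fix $z$, write $S:=s_D^\Omega(z)$, and invoke Theorem \ref{tse} to obtain an extremal map $g\in\oc_u(D,\Omega)$ with $g(z)=0$ and $\Omega(S)\subset g(D)$. Since $\rho_\Omega$ is a $\cv$-norm by Lemma \ref{lmn}, it is homogeneous of degree one, so $\Omega(S)=S\Omega$ and the scaling $\sigma(w)=Sw$ is a biholomorphism of $\Omega$ onto $\Omega(S)$. As $g$ is injective, $g^{-1}$ is holomorphic on $g(D)\supset\Omega(S)$, so
$$F:=g^{-1}\circ\sigma:\Omega\to D$$
is an injective holomorphic map, i.e. $F\in\oc_u(\Omega,D)$, with $F(0)=g^{-1}(0)=z$.

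The crux is to show $B_D^k(z,\arctanh S)\subset F(\Omega)$, which is equivalent to $g\big(B_D^k(z,\arctanh S)\big)\subset\Omega(S)$. For this I would take any $w$ with $k_D(z,w)<\arctanh S$; since $g$ is holomorphic, the Kobayashi distance is decreasing, and $g(z)=0$, we get $k_\Omega(0,g(w))\le k_D(z,w)$, whence, applying $\tanh$ and Lemma \ref{lnk},
$$\rho_\Omega(g(w))=\tanh\big(k_\Omega(0,g(w))\big)\le\tanh\big(k_D(z,w)\big)<S,$$
so that $g(w)\in\Omega(S)$. This gives $B_D^k(z,\arctanh S)\subset g^{-1}(\Omega(S))=F(\Omega)$. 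Consequently $F\in\oc_u(\Omega,D)$ realizes the inclusion $B_D^k(z,\arctanh S)\subset F(\Omega)$, so by the definition of the Fridman invariant $\arctanh(e_D^\Omega(z))\ge\arctanh S$, that is $e_D^\Omega(z)\ge S=s_D^\Omega(z)$.

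I do not expect a serious obstacle: the argument is a clean duality between the two extremal problems, and the substantive step is precisely the inclusion $g\big(B_D^k(z,\arctanh S)\big)\subset\Omega(S)$, which is exactly where Lemma \ref{lnk} is indispensable, as it converts the Kobayashi-ball condition into a Minkowski-functional condition on the image. The only further points needing care are the identification $\Omega(S)=S\Omega$ (using that $\Omega$ is balanced and convex, via Lemma \ref{lmn}) and the bookkeeping of which defining supremum/infimum is being estimated, so that the competitor $F$ certifies $e_D^\Omega(z)\ge s_D^\Omega(z)$ and not the reverse.
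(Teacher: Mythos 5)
Your proposal is correct and is essentially the paper's own proof: the paper likewise takes the squeezing extremal map $f$ from Theorem \ref{tse}, forms the competitor $w\mapsto f^{-1}(s_D^\Omega(z)\,w)$ (your $F=g^{-1}\circ\sigma$), and uses the decreasing property of the Kobayashi distance together with Lemma \ref{lnk} to identify $B^k_\Omega(0,\arctanh[s_D^\Omega(z)])$ with $\Omega(s_D^\Omega(z))$ and conclude $B_D^k(z,\arctanh[s_D^\Omega(z)])\subset F(\Omega)$, hence $e_D^\Omega(z)\ge s_D^\Omega(z)$. The only cosmetic difference is that you verify the key inclusion pointwise while the paper phrases it as an inclusion of Kobayashi balls; the content is identical.
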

\begin{proof}
For any $z\in D$, by Theorem \ref{tse}, there exists a holomorphic embedding $f:D \rightarrow \Omega$ such that $f(z)=0$ and $\Omega(s_D^\Omega(z))\subset f(D)$.

Define $g(w):=f^{-1}(s_D^\Omega(z)w)$, which is an injective holomorphic mapping from $\Omega$ to $D$ with $g(0)=z$. By the decreasing property of the Kobayashi distance and Lemma \ref{lnk}, we have
$$B^k_{f(D)}(0,\arctanh[s_D^\Omega(z)])\subset B^k_\Omega(0,\arctanh[s_D^\Omega(z)])=\Omega(s_D^\Omega(z)).$$
Thus,
$$B_D^k(z,\arctanh[s_D^\Omega(z)])=f^{-1}(B^k_{f(D)}(z,\arctanh[s_D^\Omega(z)]))\subset f^{-1}(\Omega(s_D^\Omega(z)))=g(\Omega).$$
This implies that $e_D^\Omega(z)\ge s_D^\Omega(z)$, i.e. $m_D^\Omega(z)\le 1$.
\end{proof}

A classical result of Bun Wong (\cite[Theorem E]{W:ball}) says that if there is a point $z\in D$ such that $M_D(z)=1$, then $D$ is biholomorphic to the unit ball $\bv^n$. In \cite[Theorem 3]{RY:comparison}, we showed that an analogous result for $m_D(z)$ does not hold. The next result is a generalized version of \cite[Theorem 3]{RY:comparison} for $m_D^\Omega(z)$.

\begin{thm}\label{te=s}
If $D$ is bounded, balanced and convex, then $m_D^\Omega(0)=1$.
\end{thm}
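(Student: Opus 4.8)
The plan is to combine this statement with Theorem \ref{te>s}, which already yields $m_D^\Omega(0)\le 1$; so it suffices to prove the reverse inequality $s_D^\Omega(0)\ge e_D^\Omega(0)$, after which equality forces $m_D^\Omega(0)=1$. The whole argument is the construction inverse to the one used in Theorem \ref{te>s}: there one passed from a squeezing extremal map to a Fridman competitor, and here I would pass from a Fridman extremal map to a squeezing competitor. The special role of the point $0$ and of $D,\Omega$ being balanced and convex enters through Lemma \ref{lmn} (both Minkowski functions are $\cv$-norms, hence $D(t)=tD$ and $\Omega(t)=t\Omega$) and Lemma \ref{lnk}, which identifies the Kobayashi balls centered at the origin with scaled copies of the domains: $B_D^k(0,R)=D(\tanh R)$ and $B_\Omega^k(0,R)=\Omega(\tanh R)$.

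Write $e:=e_D^\Omega(0)$ and $R:=\arctanh(e)$. Since $D$ is bounded, Theorem \ref{tee} provides a Fridman extremal map $f\in\oc_u(\Omega,D)$ with $B_D^k(0,R)\subset f(\Omega)$; by the balanced-convex identity this reads $D(e)\subset f(\Omega)$. As $0$ is the center of this ball, $0\in f(\Omega)$, so there is a unique $p\in\Omega$ with $f(p)=0$. Because $\Omega$ is homogeneous, I can choose $\psi\in\aut(\Omega)$ with $\psi(p)=0$. I then define the candidate squeezing map $g\colon D\to\Omega$ by $g(z)=\psi\big(f^{-1}(ez)\big)$. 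This is well defined since $z\in D$ gives $ez\in eD=D(e)\subset f(\Omega)$, it is an injective holomorphic map, and $g(0)=\psi(f^{-1}(0))=\psi(p)=0$, so $g$ is an admissible competitor in the definition of $s_D^\Omega(0)$.

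The heart of the matter is to show $\Omega(e)\subset g(D)=\psi\big(f^{-1}(D(e))\big)$. Here I would use the distance-decreasing property of the Kobayashi distance for the inclusion $f(\Omega)\subset D$ together with the fact that $f\colon\Omega\to f(\Omega)$ is biholomorphic: for $x\in\Omega$, $k_D(0,f(x))=k_D(f(p),f(x))\le k_{f(\Omega)}(f(p),f(x))=k_\Omega(p,x)$. Consequently $B_\Omega^k(p,R)\subset f^{-1}(B_D^k(0,R))=f^{-1}(D(e))$. Applying $\psi$, which is a Kobayashi isometry sending $p$ to $0$, and then using $B_\Omega^k(0,R)=\Omega(\tanh R)=\Omega(e)$ from Lemma \ref{lnk}, I obtain $\Omega(e)=\psi\big(B_\Omega^k(p,R)\big)\subset \psi\big(f^{-1}(D(e))\big)=g(D)$. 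Thus $s_D^\Omega(0)\ge e=e_D^\Omega(0)$, and with Theorem \ref{te>s} this yields $m_D^\Omega(0)=1$.

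I expect the main obstacle to be exactly the recentering step. The Fridman extremal map $f$ carries no normalization at the origin---unlike the squeezing map, it is not required to send any prescribed point to $0$---so $f^{-1}(0)=p$ is generally nonzero, and homogeneity of $\Omega$ is indispensable to replace $p$ by $0$ without distorting the Kobayashi balls. The remaining care is bookkeeping: checking that the two ball identities at the origin genuinely require both balancedness and convexity (through Lemmas \ref{lmn} and \ref{lnk}), and that the distance-decreasing inequality is applied to the correct inclusion $f(\Omega)\subset D$ rather than to $g$.
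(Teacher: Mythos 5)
Your proof is correct and takes essentially the same route as the paper: both invert the Fridman extremal map and rescale, setting $g(w)=f^{-1}(e_D^\Omega(0)\,w)$ (up to your extra automorphism $\psi$), then use the distance-decreasing property of the Kobayashi distance together with Lemma \ref{lnk} applied to both $D$ and $\Omega$ to get $\Omega(e_D^\Omega(0))\subset g(D)$, whence $s_D^\Omega(0)\ge e_D^\Omega(0)$ and Theorem \ref{te>s} gives equality. The only cosmetic difference is that the paper takes the extremal map already normalized with $f(0)=0$ (its proof of Theorem \ref{tee} constructs $f$ as a limit of maps $f_i$ with $f_i(0)=z$, homogeneity of $\Omega$ entering there), so your recentering automorphism $\psi$ is simply absorbed into the choice of extremal map.
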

\begin{proof}
By Theorem \ref{tee}, there exists a holomorphic embedding $f:\Omega \rightarrow D$ such that $f(0)=0$ and $B_D^k(0,e_D^\Omega(0))\subset f(\Omega)$.

Define $g(w):=f^{-1}(e_D^\Omega(0)w)$, which is an injective holomorphic mapping from $D$ to $\Omega$ with $g(0)=0$. By the decreasing property of the Kobayashi distance and Lemma \ref{lnk}, we have
$$B_{f(\Omega)}^k(0,\arctanh[e_D^\Omega(0)])\subset B_D^k(0,\arctanh[e_D^\Omega(0)])=D(e_D^\Omega(0)).$$
Thus,
$$B_\Omega^k(0,\arctanh[e_D^\Omega(0)])=f^{-1}(B_{f(\Omega)}^k(z,\arctanh[e_D^\Omega(0)]))\subset f^{-1}(D(e_D^\Omega(0)))=g(\Omega).$$
This implies that $s_D^\Omega(0)\ge e_D^\Omega(0)$. By Theorem \ref{te>s}, we always have $s_D^\Omega(0)\le e_D^\Omega(0)$. This completes the proof.
\end{proof}

\begin{cor}\label{cs=s}
Let $\Omega_i$, $i=1,2$, be two bounded, balanced, convex and homogeneous domains in $\cv^n$. Then $s^{\Omega_2}_{\Omega_1}(z_1)=s^{\Omega_1}_{\Omega_2}(z_2)$ for all $z_1\in \Omega_1$ and $z_2\in \Omega_2$.
\end{cor}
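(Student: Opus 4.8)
The plan is to reduce everything to the base point $0$ and then exploit a duality between the generalized squeezing function of $\Omega_1$ with model $\Omega_2$ and the Fridman invariant of $\Omega_2$ with model $\Omega_1$.

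First I would use homogeneity to show that both $s_{\Omega_1}^{\Omega_2}$ and $s_{\Omega_2}^{\Omega_1}$ are constant. Indeed, since the generalized squeezing function is a biholomorphic invariant and $\aut(\Omega_1)$ acts transitively, for any $z_1\in\Omega_1$ there is $\phi\in\aut(\Omega_1)$ with $\phi(z_1)=0$, whence $s_{\Omega_1}^{\Omega_2}(z_1)=s_{\Omega_1}^{\Omega_2}(0)$; the same argument applies to $\Omega_2$. Thus it suffices to prove $s_{\Omega_1}^{\Omega_2}(0)=s_{\Omega_2}^{\Omega_1}(0)$.

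The key observation is that, by Lemma \ref{lnk}, $\Omega(r)=B_\Omega^k(0,\arctanh r)$, so the sublevel sets $\Omega_2(r)$ in the definition of $s_{\Omega_1}^{\Omega_2}$ are exactly the Kobayashi balls centered at $0$ that appear in the definition of the Fridman invariant $e_{\Omega_2}^{\Omega_1}$. Concretely, I would take the extremal squeezing map $f\in\oc_u(\Omega_1,\Omega_2)$ at $0$ furnished by Theorem \ref{tse}, which satisfies $f(0)=0$ and $\Omega_2(s_{\Omega_1}^{\Omega_2}(0))\subset f(\Omega_1)$; rewriting the left-hand side as $B_{\Omega_2}^k(0,\arctanh[s_{\Omega_1}^{\Omega_2}(0)])$, this very map $f$ becomes an admissible competitor in the definition of $e_{\Omega_2}^{\Omega_1}(0)$ (one simply drops the centering condition $f(0)=0$). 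This yields $e_{\Omega_2}^{\Omega_1}(0)\ge s_{\Omega_1}^{\Omega_2}(0)$, and symmetrically $e_{\Omega_1}^{\Omega_2}(0)\ge s_{\Omega_2}^{\Omega_1}(0)$. Finally I would invoke Theorem \ref{te=s}: since each $\Omega_i$ is bounded, balanced and convex, $s_{\Omega_i}^{\Omega_j}(0)=e_{\Omega_i}^{\Omega_j}(0)$. Combining these gives
$$s_{\Omega_1}^{\Omega_2}(0)=e_{\Omega_1}^{\Omega_2}(0)\ge s_{\Omega_2}^{\Omega_1}(0)=e_{\Omega_2}^{\Omega_1}(0)\ge s_{\Omega_1}^{\Omega_2}(0),$$
forcing equality throughout, and the constancy from the first step then yields the claim for all $z_1$ and $z_2$.

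The main obstacle is the duality step: recognizing via Lemma \ref{lnk} that an extremal map for the squeezing problem in one direction is automatically a competitor for the Fridman problem in the reverse direction, and verifying that discarding the centering condition $f(0)=0$ is precisely what relaxes the squeezing constraint into the weaker Fridman constraint. One should also note why the reverse inequality cannot be obtained directly — a Fridman-extremal map $\Omega_1\to\Omega_2$ need not send $0$ to $0$, and post-composing with an automorphism of $\Omega_2$ to fix the image of $0$ moves the center of the enclosed Kobayashi ball away from $0$ — which is exactly the reason Theorem \ref{te=s} is needed to close the loop.
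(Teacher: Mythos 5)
Your proof is correct, but it closes the loop differently from the paper's. After the same two reductions (constancy via homogeneity, and the identification $\Omega(r)=B_\Omega^k(0,\arctanh r)$ from Lemma \ref{lnk}), the paper asserts the full \emph{equality} $s^{\Omega_2}_{\Omega_1}(0)=e^{\Omega_1}_{\Omega_2}(0)$ ``by definition'' and then applies Theorem \ref{te>s} twice: $s^{\Omega_1}_{\Omega_2}(0)\le e^{\Omega_1}_{\Omega_2}(0)=s^{\Omega_2}_{\Omega_1}(0)$, and symmetrically with the indices swapped. You instead establish only the easy half of that identity, $e^{\Omega_1}_{\Omega_2}(0)\ge s^{\Omega_2}_{\Omega_1}(0)$ (a squeezing competitor with the centering condition dropped is a Fridman competitor), and substitute Theorem \ref{te=s} for the missing half; since each $\Omega_i$ is bounded, balanced and convex, Theorem \ref{te=s} applies, and your four-term chain forces equality with no circularity (the proof of Theorem \ref{te=s} does not use this corollary). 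The trade-off: your route leans on the heavier Theorem \ref{te=s}, whose proof rests on the extremal-map existence result Theorem \ref{tee}, while the paper's route needs only the elementary inequality $s_D^\Omega\le e_D^\Omega$ of Theorem \ref{te>s} but leaves its ``by definition'' step unexplained.

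One substantive correction: your closing remark, that the reverse inequality $s^{\Omega_2}_{\Omega_1}(0)\ge e^{\Omega_1}_{\Omega_2}(0)$ cannot be obtained directly, is mistaken. Given a Fridman competitor $f\in\oc_u(\Omega_1,\Omega_2)$ with $\Omega_2(r)=B^k_{\Omega_2}(0,\arctanh r)\subset f(\Omega_1)$, note $0\in\Omega_2(r)\subset f(\Omega_1)$, so there is $p\in\Omega_1$ with $f(p)=0$; now \emph{precompose} with $\phi\in\aut(\Omega_1)$ satisfying $\phi(0)=p$ (homogeneity of $\Omega_1$, not of $\Omega_2$). Then $f\circ\phi(0)=0$ while the image $f\circ\phi(\Omega_1)=f(\Omega_1)$ is unchanged, so the enclosed set $\Omega_2(r)$ is undisturbed and $f\circ\phi$ is a squeezing competitor for the same $r$. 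You only considered postcomposing with automorphisms of $\Omega_2$, which indeed displaces the center of the Kobayashi ball; the precomposition trick is precisely what justifies the paper's definitional equality and shows that Theorem \ref{te=s} is not actually needed here.
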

\begin{proof}
Since both $\Omega_1$ and $\Omega_2$ are homogeneous, it suffices to show that $s^{\Omega_2}_{\Omega_1}(0)=s^{\Omega_1}_{\Omega_2}(0)$.

By Lemma \ref{lnk}, we have $B^k_{\Omega_2}(0,\arctanh(r))=\Omega_2(r)$ for $r>0$. Then, by definition, $s^{\Omega_2}_{\Omega_1}(0)=e^{\Omega_1}_{\Omega_2}(0)$. By Theorem \ref{te>s}, we get $s^{\Omega_2}_{\Omega_1}(0)=s^{\Omega_1}_{\Omega_2}(0)$.
\end{proof}

We can also compare generalized squeezing functions for different model domains as follows.

\begin{thm}
Let $\Omega_i$, $i=1,2$, be two bounded, balanced, convex and homogeneous domains in $\cv^n$. Then, for any $z\in D$, we have
$$s^{\Omega_1}_{\Omega_2}(0)s^{\Omega_2}_D(z)\le s^{\Omega_1}_D(z)\le \frac{1}{s^{\Omega_1}_{\Omega_2}(0)}s^{\Omega_2}_D(z).$$
\end{thm}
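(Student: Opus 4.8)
The plan is to prove only the left-hand inequality $s^{\Omega_1}_{\Omega_2}(0)\,s^{\Omega_2}_D(z)\le s^{\Omega_1}_D(z)$ for arbitrary admissible $\Omega_1,\Omega_2$, and then to read off the right-hand one by symmetry. Indeed, interchanging the roles of $\Omega_1$ and $\Omega_2$ in the left inequality gives $s^{\Omega_2}_{\Omega_1}(0)\,s^{\Omega_1}_D(z)\le s^{\Omega_2}_D(z)$, and since $s^{\Omega_2}_{\Omega_1}(0)=s^{\Omega_1}_{\Omega_2}(0)$ by Corollary \ref{cs=s}, this rearranges precisely to $s^{\Omega_1}_D(z)\le \frac{1}{s^{\Omega_1}_{\Omega_2}(0)}\,s^{\Omega_2}_D(z)$. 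Thus a single inequality carries all the content.

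Write $a=s^{\Omega_2}_D(z)$ and $t=s^{\Omega_1}_{\Omega_2}(0)$. By Theorem \ref{tse} there is an extremal embedding $f:D\to\Omega_2$ with $f(z)=0$ and $\Omega_2(a)\subset f(D)$; applying Theorem \ref{tse} once more, with $\Omega_2$ playing the role of the bounded domain and $\Omega_1$ the model, there is an extremal embedding $g:\Omega_2\to\Omega_1$ with $g(0)=0$ and $\Omega_1(t)\subset g(\Omega_2)$. Then $g\circ f:D\to\Omega_1$ is injective holomorphic with $(g\circ f)(z)=0$ and $(g\circ f)(D)\supset g(\Omega_2(a))$. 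The whole argument therefore reduces to the inclusion $\Omega_1(ta)\subset g(\Omega_2(a))$, since that would force $s^{\Omega_1}_D(z)\ge ta$, as desired.

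To establish this inclusion I would first record the scaling identity $\Omega(r)=r\,\Omega$ with $\rho_{\Omega(r)}=\rho_\Omega/r$, immediate from the positive homogeneity of the Minkowski functional (Lemma \ref{lmn}); in particular $\Omega_1(t)=t\,\Omega_1$ is again bounded, balanced and convex, so Lemma \ref{lnk} applies to it. Now fix $w\in\Omega_1(ta)$. Since $a\le 1$ we have $ta\le t$, so $w\in\Omega_1(t)\subset g(\Omega_2)$ and $g^{-1}(w)$ is defined. Using that $g$ is a biholomorphism onto $g(\Omega_2)$, the decreasing property of the Kobayashi distance for $\Omega_1(t)\subset g(\Omega_2)$, and Lemma \ref{lnk} applied to both $\Omega_1(t)$ and $\Omega_2$, the key estimate is the chain
$$k_{\Omega_2}(0,g^{-1}(w))=k_{g(\Omega_2)}(0,w)\le k_{\Omega_1(t)}(0,w)=\arctanh\big(\rho_{\Omega_1}(w)/t\big)<\arctanh(a),$$
the final step holding because $\rho_{\Omega_1}(w)<ta$. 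Hence $\rho_{\Omega_2}(g^{-1}(w))=\tanh[k_{\Omega_2}(0,g^{-1}(w))]<a$, that is $g^{-1}(w)\in\Omega_2(a)$, which proves the claim.

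The only delicate point is orienting the inequalities correctly: the inclusion $\Omega_1(t)\subset g(\Omega_2)$ must be used to bound $k_{g(\Omega_2)}$ from \emph{above} by $k_{\Omega_1(t)}$ (larger domain, smaller distance), and the scaling identity $\rho_{\Omega_1(t)}=\rho_{\Omega_1}/t$ must then be fed through Lemma \ref{lnk} to convert that Kobayashi estimate back into the statement $\rho_{\Omega_2}(g^{-1}(w))<a$. Once the direction is pinned down, the remainder is a routine composition of the two extremal maps.
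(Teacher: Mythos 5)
Your proof is correct and follows essentially the same route as the paper: both compose the two extremal maps from Theorem \ref{tse}, use the decreasing property of the Kobayashi distance together with Lemma \ref{lnk} applied to the dilated model (your $\Omega_1(t)$ with $\rho_{\Omega_1(t)}=\rho_{\Omega_1}/t$, the paper's $\Omega=\Omega_2(s^{\Omega_2}_{\Omega_1}(0))$) to get the key inclusion, and then obtain the second inequality by symmetry via Corollary \ref{cs=s}. The only cosmetic differences are that you compose the extremal maps in the opposite order (covered by the paper's ``similarly'' step) and run the Kobayashi-ball comparison pointwise rather than as a single ball inclusion.
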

\begin{proof}
For any $z\in D$, by Theorem \ref{tse}, there exists a holomorphic embedding $f:D \rightarrow \Omega_1$ such that $f(z)=0$ and $\Omega_1(s_D^\Omega(z))\subset f(D)$. And there exists a holomorphic embedding $g:\Omega_1 \rightarrow \Omega_2$ such that $g(0)=0$ and $\Omega_2(s^{\Omega_2}_{\Omega_1}(0))\subset g(\Omega_1)$.

Set $F=g\circ f$. Then $F\in \oc_u(D,\Omega_2)$ with $F(z)=0$. Denote $\Omega=\Omega_2(s^{\Omega_2}_{\Omega_1}(0))$. Then $\Omega$ is a bounded, balanced and convex domain with $\rho _\Omega=\frac{1}{s^{\Omega_2}_{\Omega_1}(0)}\rho _{\Omega_2}$. By the  decreasing property of the Kobayashi distance and Lemma \ref{lnk}, we have
\begin{align*}
B^k_\Omega(0,\arctanh[s^{\Omega_1}_D(z)])&\subset B^k_{g(\Omega_1)}(0,\arctanh[s^{\Omega_1}_D(z)])=g(B^k_{\Omega_1}(0,\arctanh[s^{\Omega_1}_D(z)]))\\
&=g(\Omega_1(s^{\Omega_1}_D(z)))\subset g(f(D))=F(D).
\end{align*}
On the other hand, by Lemma \ref{lnk}, we have
\begin{align*}
B^k_\Omega(0,\arctanh[s^{\Omega_1}_D(z)])&=\{w\in \Omega:\rho _\Omega(w)<s^{\Omega_1}_D(z)\}\\
&=\{w\in \Omega_2:\rho _{\Omega_2}(w)<s^{\Omega_2}_{\Omega_1}(0)s^{\Omega_1}_D(z)\}.
\end{align*}
This implies that $s^{\Omega_2}_D(z)\ge s^{\Omega_2}_{\Omega_1}(0)s^{\Omega_1}_D(z)$. Similarly, $s^{\Omega_1}_D(z)\ge s^{\Omega_1}_{\Omega_2}(0)s^{\Omega_2}_D(z)$. By Corollary \ref{cs=s}, we get
$$s^{\Omega_1}_{\Omega_2}(0)s^{\Omega_2}_D(z)\le s^{\Omega_1}_D(z)\le \frac{1}{s^{\Omega_1}_{\Omega_2}(0)}s^{\Omega_2}_D(z).$$
\end{proof}

We finish our study by computing explicitly some generalized squeezing functions in the next result, which generalizes \cite[Corollary 7.3]{Deng2012}.

\begin{thm}
For any $z\in\Omega\backslash \{0\}$, $s_{\Omega\backslash \{0\}}^\Omega(z)=\rho _\Omega(z)$.
\end{thm}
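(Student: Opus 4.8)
The plan is to prove the two inequalities $s_{\Omega\setminus\{0\}}^\Omega(z)\ge\rho_\Omega(z)$ and $s_{\Omega\setminus\{0\}}^\Omega(z)\le\rho_\Omega(z)$ separately, using crucially that in this section $\Omega$ is also homogeneous.

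For the lower bound, since $\Omega$ is homogeneous there is $\psi\in\aut(\Omega)$ with $\psi(z)=0$. Its restriction $f=\psi|_{\Omega\setminus\{0\}}$ lies in $\oc_u(\Omega\setminus\{0\},\Omega)$ and satisfies $f(z)=0$, with image $\Omega\setminus\{p\}$ where $p=\psi(0)$. Hence $\Omega(r)\subset f(\Omega\setminus\{0\})$ holds exactly when $p\notin\Omega(r)$, i.e. when $\rho_\Omega(p)\ge r$, so this single map already yields $s_{\Omega\setminus\{0\}}^\Omega(z)\ge\rho_\Omega(p)$. It then remains to compute $\rho_\Omega(p)$: by Lemma \ref{lnk} and the invariance of $k_\Omega$ under $\psi$,
$$\rho_\Omega(p)=\tanh k_\Omega(0,\psi(0))=\tanh k_\Omega(\psi(z),\psi(0))=\tanh k_\Omega(z,0)=\rho_\Omega(z),$$
giving $s_{\Omega\setminus\{0\}}^\Omega(z)\ge\rho_\Omega(z)$.

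For the upper bound, take any $f\in\oc_u(\Omega\setminus\{0\},\Omega)$ with $f(z)=0$ and $\Omega(r)\subset f(\Omega\setminus\{0\})$; I must show $r\le\rho_\Omega(z)$. Since $f$ is bounded, the isolated singularity at $0$ is removable, so $f$ extends to a holomorphic $\tilde f:\Omega\to\overline\Omega$; as $\tilde f(z)=0\in\Omega$, the maximum principle applied to the pluriharmonic supporting functionals of the convex body $\Omega$ forces $\tilde f(\Omega)\subset\Omega$, so $\tilde f$ is a holomorphic self-map of $\Omega$ with $\tilde f(z)=0$ and $\tilde f(0)=:p$. The key point is that $p\notin f(\Omega\setminus\{0\})$: if $p=f(w_0)$ for some $w_0\ne 0$, then writing $g$ for the inverse of the biholomorphism $f:\Omega\setminus\{0\}\to f(\Omega\setminus\{0\})$ and choosing $w_k\to 0$, continuity of $g$ at $p$ would give $w_k=g(f(w_k))\to g(p)=w_0\ne 0$, contradicting $w_k\to 0$. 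Hence $p\notin\Omega(r)$, that is $\rho_\Omega(p)\ge r$. On the other hand, the decreasing property of the Kobayashi distance under $\tilde f$ together with Lemma \ref{lnk} gives
$$\rho_\Omega(p)=\tanh k_\Omega(0,p)=\tanh k_\Omega(\tilde f(z),\tilde f(0))\le\tanh k_\Omega(z,0)=\rho_\Omega(z).$$
Combining, $r\le\rho_\Omega(p)\le\rho_\Omega(z)$, and taking the supremum over all competing $f$ yields $s_{\Omega\setminus\{0\}}^\Omega(z)\le\rho_\Omega(z)$.

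I expect the main obstacle to be the upper bound, specifically the two structural facts about the extension $\tilde f$: that it still maps $\Omega$ into $\Omega$ (requiring the boundary maximum principle for the balanced convex domain $\Omega$), and that the omitted value is exactly $p=\tilde f(0)$, which is what lets me play $\rho_\Omega(p)\ge r$ off against the Schwarz--Pick bound $\rho_\Omega(p)\le\rho_\Omega(z)$. Everything else reduces to bookkeeping with Lemma \ref{lnk}.
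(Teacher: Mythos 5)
Your proof is correct, and while your lower bound coincides exactly with the paper's (restrict an automorphism $\psi$ with $\psi(z)=0$ to the punctured domain and compute $\rho_\Omega(\psi(0))=\rho_\Omega(z)$ via Lemma \ref{lnk}), your upper bound takes a genuinely different route. The paper first invokes the existence of an extremal map (Theorem \ref{tse}, via Montel and the generalized Hurwitz lemma), sets $g(w)=f^{-1}(s^\Omega_{\Omega\setminus\{0\}}(z)\,w)$, and then transfers the problem to the Carath\'eodory distance: since bounded holomorphic functions extend across the puncture, $c_{\Omega\setminus\{0\}}=c_\Omega$, and the containment $B^{c}_{\Omega\setminus\{0\}}(z,\arctanh[s^\Omega_{\Omega\setminus\{0\}}(z)])\subset g(\Omega)$ forces $\arctanh(s^\Omega_{\Omega\setminus\{0\}}(z))\le c_\Omega(z,0)=\arctanh(\rho_\Omega(z))$. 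You instead apply the removable singularity theorem to the competing map itself, use the supporting-hyperplane maximum principle (this is where convexity enters for you) to show the extension $\tilde f$ is a self-map of $\Omega$, establish the omitted-value fact $p=\tilde f(0)\notin f(\Omega\setminus\{0\})$ --- your continuity-of-the-inverse argument is sound, since an injective holomorphic map on an open subset of $\cv^n$ is biholomorphic onto its open image --- and then play $\rho_\Omega(p)\ge r$ against the Schwarz--Pick bound $\rho_\Omega(p)\le\rho_\Omega(z)$. Your version buys three things: it needs no normal-families or extremal-map machinery, since the estimate $r\le\rho_\Omega(z)$ is proved for each competitor $(f,r)$ directly; it avoids the Carath\'eodory distance and the identity $c_{\Omega\setminus\{0\}}=c_\Omega$ entirely, using only Lemma \ref{lnk} and the decreasing property of $k_\Omega$ on $\Omega$ itself (conveniently sidestepping the dimension-one subtlety that $k_{\Omega\setminus\{0\}}\neq k_\Omega$, which is precisely why the paper must use $c$ rather than $k$); and it makes explicit the omitted-value mechanism that the paper's terse final ``Thus'' leaves implicit. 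What the paper's route buys in exchange is brevity given its earlier machinery, plus the independently useful Carath\'eodory comparison.
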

\begin{proof}
Since $\Omega$ is homogeneous, for any $z\in\Omega\backslash \{0\}$, there exists $\psi\in \aut(\Omega)$ such that $\psi(z)=0$. Then, by Lemma \ref{lnk},
$$\rho_\Omega(\psi(0))=\tanh[k_\Omega(\psi(0),0)]=\tanh[k_\Omega(\psi(0),\psi(z))]=\tanh[k_\Omega(0,z)]=\rho_\Omega(z).$$
It follows that $s^\Omega_{\Omega \backslash \{0\}}(z)\ge \rho _\Omega(z)$.

Next, we show that $s^\Omega_{\Omega \backslash \{0\}}(z)\le \rho _\Omega(z)$. By Theorem \ref{tse}, there exists a holomorphic embedding $f:\Omega\backslash \{0\} \rightarrow \Omega$ such that $f(z)=0$ and $\Omega(s^\Omega_{\Omega\backslash \{0\}}(z))\subset f(\Omega\backslash \{0\})$.

Define $g(w):=f^{-1}(s^\Omega_{\Omega \backslash \{0\}}(z)w)$, which is an injective holomorphic mapping from $\Omega$ to $\Omega\backslash \{0\}$ with $g(0)=z$. By the decreasing property of the Carath\'{e}odory distance and Lemma \ref{lnk}, we have
\begin{align*}
B^{c}_{\Omega \backslash \{0\}}(z,\arctanh[s^\Omega_{\Omega \backslash \{0\}}(z)])
&=f^{-1} (B^{c}_{f(\Omega \backslash \{0\})}(0,\arctanh[s^\Omega_{\Omega \backslash \{0\}}(z)]))\\
&\subset f^{-1} (B^{c}_\Omega(0,\arctanh[s^\Omega_{\Omega \backslash \{0\}}(z)]))\\
&=f^{-1}(\Omega(s^\Omega_{\Omega \backslash \{0\}}(z)))=g(\Omega).
\end{align*}
By Riemann's removable singularity theorem, we have $c_{\Omega \backslash \{0\}}(z_1,z_2)=c_\Omega(z_1,z_2)$ for all $z_1, z_2\in \Omega \backslash \{0\}$. Thus,
$$\arctanh(s^\Omega_{\Omega \backslash \{0\}}(z))\le c_\Omega(z,0)=\arctanh(\rho _\Omega(z)).$$
Hence, $s^\Omega_{\Omega \backslash \{0\}}(z)\le \rho _\Omega(z)$, which completes the proof.
\end{proof}

\end{document}